\newcommand{\nc}{\newcommand}
\newcommand{\delete}[1]{}
\nc{\mlabel}[1]{\label{#1}}  
\nc{\mcite}[1]{\cite{#1}}  
\nc{\mref}[1]{\ref{#1}}  
\nc{\mbibitem}[1]{\bibitem{#1}} 
\nc{\mlabel}[1]{\label{#1}  
{\hfill \hspace{1cm}{\small\tt{{\ }\hfill(#1)}}}}
\nc{\mcite}[1]{\cite{#1}{\small{\tt{{\ }(#1)}}}}  
\nc{\mref}[1]{\ref{#1}{{\tt{{\ }(#1)}}}}  
\nc{\mbibitem}[1]{\bibitem[\bf #1]{#1}} 
\newtheorem{theorem}{Theorem}[section]
\newtheorem{prop}[theorem]{Proposition}
\newtheorem{lemma}[theorem]{Lemma}
\newtheorem{coro}[theorem]{Corollary}
\theoremstyle{definition}
\newtheorem{defn}[theorem]{Definition}
\newtheorem{remark}[theorem]{Remark}
\newtheorem{prob}[theorem]{Problem}
\newtheorem{prop-def}{Proposition-Definition}[section]
\newcommand\alphlist{a,b,c,d,e,f,g,h,i,j,k,l,m,n,o,p,q,r,s,t,u,v,w,x,y,z}
\newcommand\Alphlist{A,B,C,D,E,F,G,H,I,J,K,L,M,N,O,P,Q,R,S,T,U,V,W,X,Y,Z}
\newcommand\getcmds[3]{\expandafter\newcommand\csname #2#1\endcsname{#3{#1}}}
\alphlist\do{\expandafter\getcmds\expandafter{\x}{frak}{\mathfrak}}
\Alphlist\do{\expandafter\getcmds\expandafter{\x}{frak}{\mathfrak}}
\nc{\bfk}{{\bf k}}
\font\cyr=wncyr10
\newfont{\scyr}{wncyr10 scaled 550}
\nc{\sha}{\mbox{\cyr X}}
\nc{\ssha}{\mbox{\bf \scyr X}}
\nc{\Id}{\mathrm{Id}}
\nc{\lbar}[1]{\overline{#1}}
\nc{\leaf}{\mathrm{leaf}}
\newcommand \lie{\mathfrak{g}} 
\newcommand \plie{P_{\lie}}
\newcommand \rblie{(\mathfrak{g}, P_{\mathfrak{g}})} 
\newcommand \tril{\triangleleft} 
\newcommand \trir{\triangleright} 
\newcommand \gv{\lie \natural V} 
\newcommand \np{\mathcal{P}} 
\newcommand \pe{P_E}
\nc{\li}[1]{\textcolor{red}{#1}}
\nc{\lir}[1]{\textcolor{red}{Li:#1}}
\nc{\yi}[1]{\textcolor{blue}{Yi: #1}}
\nc{\xing}[1]{\textcolor{purple}{Xing:#1}}
\nc{\revise}[1]{\textcolor{red}{#1}}
\begin{document}

\title[Extending structures of Rota-Baxter Lie algebras]{Extending structures of Rota-Baxter Lie algebras}
\author{Xiao-song Peng}
\address{School of Mathematics and Statistics,
Jiangsu Normal University, Xuzhou, Jiangsu 221116, P.\,R. China}
\email{pengxiaosong3@163.com}

\author{Yi Zhang$^{*}$}\footnotetext{${*}$Corresponding author.}
\address{School of Mathematics and Statistics,
	Nanjing University of Information Science \& Technology, Nanjing, Jiangsu 210044, P.\,R. China}
\email{zhangy2016@nuist.edu.cn}


\date{\today}
\begin{abstract}
In this paper,  we first introduce the notion of an extending datum of a Rota-Baxter Lie algebra  through a vector space. We then construct a unified product for the Rota-Baxter Lie algebra  with a vector space as a main ingredient in our approach. Finally, we solve the extending structures problem of Rota-Baxter Lie algebras, which generalizes and unifies two  problems in the study of Rota-Baxter Lie algebras: the extension problem studied by Mishra-Das-Hazra and the factorization problem investigated by Lang-Sheng.
\end{abstract}

\subjclass[2010]{17B38,  17B05, 17B60
}

\keywords{}

\maketitle

\tableofcontents

\setcounter{section}{0}

\allowdisplaybreaks

\section{Introduction}
The aim of this paper is to solve the extending structure problem for Rota-Baxter Lie algebras which asks for the classification of all Rota-Baxter Lie algebraic structures on a given vector space by the approach of a general unified product.

\subsection{Rota-Baxter (Lie) algebras}
Let $\mathbf{k}$ be a field and $\lambda \in \mathbf{k}$. A {\bf Rota-Baxter algebra } of weight $\lambda$ is a pair $(R,P)$, where $R$ is an associative algebra over $\mathbf{k}$, and $P:R\rightarrow R$ is a linear operator which satisfies the {\bf Rota-Baxter identity}
\begin{align}
P(x)P(y)=P(xP(y)+P(x)y+\lambda xy) \, \text{ for }\, x, y\in R. \label{eq:RBid}
\end{align}
Then $P$ is called a {\bf Rota-Baxter operator} of weight $\lambda$.
The notion of a Rota-Baxter algebra was probably introduced in the first time by G. Baxter~\cite{Bax60} in 1960 based on the study of Spitzer's identity in fluctuation theory.
It is a natural  algebraic interpretation of  the formula of partial integration in analysis, as in the case of a differential algebra could be treated as an algebraic abstraction of differential equations. More precisely, let $R$ be the $\mathbb{R}$-algebra of continuous functions on $\mathbb{R}$. Define $P: R\rightarrow R$ to be the integration
\begin{align*}
P(f)(x)=\int_{0}^{x}f(t)dt.
\end{align*}
Then the integration by parts formula
\begin{align*}
\int_{0}^{x}P(f)'(t)P(g)(t)dt=P(f)(x)P(g)(x)
-\int_{0}^{x}P(f)(t)P(g)'(t)dt
\end{align*}
is precise the Eq.~(\ref{eq:RBid}) with $\lambda=0$.

After 60 years developments, the study of Rota-Baxter
algebras has become a vibrant research field and   plays an important role in mathematics and mathematical physics, such as multiple zeta values~\cite{GZ10}, pre-Lie algebras~\cite{Agu00, Bai20}, Hopf algebras~\cite{Gon21, MLCW22, YGT19, ZGG16, ZGG22}, (antisymmetric) infinitesimal bialgebras~\cite{Bai1, SW23}, dendriform algebras~\cite{EG08, Foi21, ZGM20}, operads~\cite{PBG17}, ~$\mathcal{O}$-operators~\cite{BGN1}, quantum field theory~\cite{CK}, classical (associative) Yang-Baxter equations~\cite{BGN1, BGM21, BGGZ, Gub21}. For more details about Rota-Baxter algebras, see the monograph~\cite{Gub}.

Completely independent of the above developments, the Rota-Baxter operator in the context of Lie algebras has its own motivations and developing history. In fact, it is an operator form of the classical Yang-Baxter equation (CYBE), named after the  physicists C. N. Yang~\cite{Yang} and R. Baxter~\cite{BaR}. For a given Lie algebra $\lie$, the {\bf classical Yang-Baxter equation} (CYBE) was  defined by the following tensor form
\begin{equation}\notag
[r_{12},r_{13}]+[r_{12},r_{23}]+[r_{13},r_{23}]=0,
\label{eq:cybe}
\end{equation}
where $r\in \lie \otimes \lie $. In~\cite{STS}, Semonov-Tian-Shansky showed
that if there exists a nondegenerate symmetric invariant bilinear
form on  $\lie$,  then a skew-symmetric  solution $r$ of
the CYBE  can be equivalently expressed
as a linear operator $R:\lie \to \lie$ satisfying the following identity
\begin{equation}\label{eq:CYBE-RB}
[R(x), R(y)]=R([R(x),y])+R([x,R(y)]),\;\;\forall x,y\in
\lie,\end{equation}
which is precisely the Rota-Baxter relation (of weight zero) in Eq.~(\ref{eq:RBid}) in the context of Lie algebras. In this way,  Eq.~(\ref{eq:CYBE-RB}) is called an {\bf operator form} of the CYBE. We would like to emphasize that this result was generalized by  Kupershmidt~\mcite{Ku} by the concept of an $\mathcal{O}$-operator, which was also called a relative Rota-Baxter operator.

Quite recently the discovery of some new algebraic and geometric structures in mathematics and physics~\cite{GLS21} has led to a renewed interest in the study of Rota-Baxter Lie algebras and their connections with deformations and homotopy theory~\cite{LST21, LST23, S22}, cf.~\cite{S22} for recent surveys. One of the main motivation comes from the emergence of Rota-Baxter Lie groups. In ~\cite{GLS21}, Lang-Sheng-Guo showed that the differentiation of
a Rota-Baxter Lie group is a Rota-Baxter Lie algebra of weight 1, generalizing the fact that the differentiation of a  Lie group is a  Lie algebra. Let $G$ be a Lie group. A {\bf Rota-Baxter Lie group} is a Lie group $G$ together with a smooth map  $\mathcal{B}:G\rightarrow G$ on $G$ satisfying
\begin{align*}
\mathcal{B}(g_1)\mathcal{B}(g_2)=\mathcal{B}(g_1 \mathrm{Ad}_{\mathcal{B}(g_1)}g_2), \ \ \forall g_1, g_2\in G,
\end{align*}
where $\mathrm{Ad}_g, g\in G$ is an adjoint action. Then the following formula
\begin{align*}
[u, v]_\lie=\frac{d^2}{dt ds}\mid_{t,s=0}\mathrm{exp}^{tu}\mathrm{exp}^{sv}\mathrm{exp}^{-tu}, \ \ \forall u,v \in \lie
\end{align*}
captures the relationship between the Lie bracket $[,]_\lie$ and the Lie group multiplication.
Let $e$ be an identity of $G$ and $\lie=T_e G$  the Lie algebra of $G$. Then the tangent map $B=\mathcal{B}_{\star e}:\lie \rightarrow \lie$ of $\mathcal{B}$  at the identity $e$ is a Rota-Baxter operator of weight $1$ and so $(\lie,[,]_\lie, B )$ is a Rota-Baxter Lie algebra of weight 1.

Viewing Rota-Baxter Lie algebras in the framework of bialgebras, Lang and Sheng~\cite{LS23}  introduced the notion of a quadratic Rota-Baxter Lie algebra, which has a  one-one correspondence to the factorizable Lie bialgebras. This leads to the  introduction of  a Rota-Baxter Lie bialgebra~\cite{LS23}. The study of Rota-Baxter Lie bialgebras was partly motivated by the theory of Manin triple approach. Except for the construction  of a one-to-one correspondence between Manin triples of Rota-Baxter Lie algebras  and Rota-Baxter Lie bialgebras~\cite{LS23},
Bai-Guo-Liu-Ma~\cite{BGLM22} independently
established a general bialgebra structure on Rota-Baxter Lie algebras following the approach of Manin triples to Lie bialgebras and antisymmetric infinitesimal bialgebras. A cohomology theory of a relative Rota-Baxter Lie algebra with coefficients in a representation was studied in~\cite{JS21}.


\subsection{ Extending structures problem}

The extending structures(ES) problem  arose in the study of group theory by Agore and  Militaru~\cite{AM141} around 2014. It can be regarded as a uniform of two well-known problems in group theory-- the {\bf extension problem} of H\"{o}lder~\cite{H95} and the {\bf factorization problem} of Ore~\cite{Ore37}, which  served as baby models for the ES-problem approach~\cite{AM141}. Later, Agore and  Militaru argued in favor of the study of  ES-problems on several occasions. For example, the extending structures problem for Lie algebras and unital associative algebras are solved in ~\cite{AM14} and ~\cite{AM16} respectively. Apparently, the ES-problem becomes more popular when some new motivations were found, coming on one hand from interesting applications to conformal algebras~\cite{HS17} and on the other from close relation to the theory of 3-Lie algebras~\cite{Z22} and superalgebras~\cite{ZCY19}.

Considering the fact that the Rota-Baxter Lie algebra is a generalization of the Lie algebra, it is almost a natural question to consider the following extending structures problem.
\begin{prob}
Let $\rblie$ be a Rota-Baxter Lie algebra. Let $E$ be a vector space containing $\lie$ as a subspace. Up to an isomorphism of Rota-Baxter Lie algebras which stabilizes $\rblie$ , describe
and classify all the Rota-Baxter Lie algebraic structures $([-, -], \pe)$ on  $E$     such that $\rblie$ is a Rota-Baxter Lie subalgebra of $(E,[-, -],\pe)$.
\end{prob}
In order to solve the above  problem, we follow the steps of Agore and Militaru~\cite{AM14} on the construction of the Lie algebra case, we give the concept of an extending datum of a Rota-Baxter Lie algebra through a vector space and a Rota-Baxter Lie extending structure. Generalizing the results of  Agore and Militaru~\cite{AM14} of the Lie algebra case, the description part and the classification part of the extending structure problem of Rota-Baxter Lie algebras are answered by a unified product of Rota-Baxter Lie algebras and a relative cohomology group.
In particular, the non-abelian extension of Rota-Baxter Lie algebras in~\cite{MDH23} and the factorization problem of Rota-Baxter Lie algebras in~\cite{LS23} appear to be special cases of the Rota-Baxter Lie extending structures problem.

This paper is organized as follows. In Section~\ref{sec:a}, we give the concept of an extending datum of Rota-Baxter Lie algebras with vector spaces and a unified product from an extending datum. Then the description part of the extending structure problem of Rota-Baxter Lie algebras is answered in Theorem~\ref{thm:indu}. The classification part of the extending structure problem of Rota-Baxter Lie algebras is answered in Theorem~\ref{thm:main} using a relative cohomology group. In Section~\ref{sec:b}, we show that the cross products and bicrossed products of two Rota-Baxter Lie algebras are special cases of the unified product. In Section~\ref{sec:c}, we consider the flag extending structure of Rota-Baxter Lie algebras and give an answer to the calculation of the classifying object of the extending structure of the flag extending structure of Rota-Baxter Lie algebras.

{\bf Notation.} Throughout this paper, let {\bf k} be a field unless the contrary is specified, which will be the base ring of all modules, algebras, coalgebras, bialgebras, tensor products, as well as linear maps.

\section{Unified products for Rota-Baxter Lie algebras}\label{sec:a}

In this section, we first introduce an extending datum of a Rota-Baxter Lie algebra. We then give a construction of the unified product of a Rota-Baxter Lie algebra with a vector space. We prove that any Rota-Baxter Lie algebra on a vector space $E$ containing a given Rota-Baxter Lie algebra as a Rota-Baxter Lie subalgebra is isomorphic to a unified product. Finally, the answer to the classification part of the extending structure problem is given.

\begin{defn}
Let $\lambda$ be a fixed element of $\bfk$. A {\bf Rota-Baxter Lie algebra of weight $\lambda$} is a Lie algebra $\lie$ together with a linear map $P_{\lie}$ such that
\begin{align*}
P[g,h]=P([P(g),h]+[g,P(h)]+\lambda [g,h]), \,\, \forall g,h \in \lie.
\end{align*}
Denote it by $(\lie, [\cdot, \cdot], P_\lie)$ or $\rblie$.
\end{defn}

{\em Throughout the rest of this paper}, we say $\rblie$ is a Rota-Baxter Lie algebra to mean that $\rblie$ is a Rota-Baxter Lie algebra of weight $\lambda$.

\begin{defn}
Let $\rblie$ and $(\lie',P_{\lie'})$ be two Rota-Baxter Lie algebras. A {\bf morphism
of Rota-Baxter Lie algebras} $f:\rblie \rightarrow (\lie',P_{\lie'})$ is a morphism of Lie algebras $f: \lie \rightarrow \lie'$ such that $f \circ P_{\lie}=P_{\lie'} \circ f$. It is said to be an {\bf isomorphism} if $f$ is a linear isomorphism.
\end{defn}

Let $E$ be a vector space and $\lie \subset E$  a subspace of $E$.
A subspace $V$ of $E$ is a {\bf complement} of $\lie$ in $E$ if $E=V+ \lie$ and $V \cap \lie=0$. Note that such a complement is unique up to isomorphism and its dimension is called the {\bf codimension} of $\lie$ in $E$.

For later use, we recall the notion of a Rota-Baxter module over a Rota-Baxter Lie algebra given in ~\cite{LS23}.

\begin{defn}
Let $\lie$ be a Lie algebra. A {\bf left $\lie$-module} is a vector space $V$ together with a linear map $\trir: \lie \otimes V \rightarrow V$, called the left action of $\lie$ on $V$, such that
\begin{align*}
[g, h] \trir x= g \trir (h \trir x)-h \trir (g \trir x),
\end{align*}
for all $x \in V$ and $g, h \in \lie$.
\end{defn}
The notion of right $\lie$-modules can be defined similarly. Note that any right $\lie$-module is a left $\lie$-module via $g \trir x :=-x \tril g$ and viceversa. Hence the category of right $\lie$-modules is isomorphic to the category of left $\lie$-modules.

\begin{defn}\cite[Definition~3.6]{LS23}
Let $\rblie$ be a Rota-Baxter Lie algebra. A {\bf left $\rblie$-module} is a pair $(V,T)$, where $V$ is a vector space and $T: V \rightarrow V$ is a linear map, together with a linear map $\trir : \lie \otimes V \rightarrow V$ such that $V$ is a left $\lie$-module under the map $\trir$ and for $g\in \lie$ and $v \in V$,
\begin{align*}
P_{\lie}(g) \trir T(v)=T(P_{\lie}(g) \trir v+g \trir T(v)+ \lambda g \trir v).
\end{align*}
\end{defn}
The notion of right $\rblie$-modules can be defined similarly.

\begin{defn}
Let $\rblie$ be a Rota-Baxter Lie algebra and let $(E,P_E)$ be a pair where $E$ is a vector space and $P_E:E \rightarrow E$ is a linear map. Suppose $\lie$ is a subspace of $E$ and $V$ is a complement of $\lie$ in $E$. For a linear map $\phi : E \rightarrow E$ such that $\phi \circ P_E=P_E \circ \phi$, we consider the diagram:
\begin{align*}
\xymatrix{
  \rblie \ar[d]_{id_{\lie}} \ar[r]^{i} & (E,P_E) \ar[d]_{\phi} \ar[r]^{\pi} & V \ar[d]^{id_V} \\
  \rblie \ar[r]^{i} & (E,P_E) \ar[r]^{\pi} & V   }
\end{align*}
where $\pi : E \rightarrow V$ is the canonical projection of $E = \lie + V$ on $V$ and $i: \lie \rightarrow E$ is
the inclusion map such that $i \circ P_{\lie}=P_E \circ i$. We say that $\phi: E \rightarrow E$ {\bf stabilizes} $\rblie$ (resp. {\bf co-stabilizes} $V$ ) if the left square (resp. the right square) of the above diagram is commutative.
\end{defn}

\begin{defn}
Suppose that $(E, [\cdot, \cdot], P)$ and $(E, [\cdot, \cdot]', P')$ are two Rota-Baxter Lie algebras on $E$ both containing $\rblie$ as a Rota-Baxter Lie subalgebra.
\begin{enumerate}
\item  $(E, [\cdot, \cdot], P)$ and $(E, [\cdot, \cdot]', P')$ are called {\bf equivalent},  denoted  by $(E, [\cdot, \cdot], P) \equiv (E, [\cdot, \cdot]', P')$, if there exists a Rota-Baxter Lie algebra isomorphism $\phi: (E, [\cdot, \cdot], P) \rightarrow (E, [\cdot, \cdot]', P')$ which stabilizes $\rblie$.
\item $(E, [\cdot, \cdot], P)$ and $(E, [\cdot, \cdot]', P')$ are called {\bf cohomologous}, denoted  by $(E, [\cdot, \cdot], P) \approx (E, [\cdot, \cdot]', \\P')$, if there exists a Rota-Baxter Lie algebra isomorphism $\phi: (E, [\cdot, \cdot], P) \rightarrow (E, [\cdot, \cdot]', P')$ which stabilizes $\rblie$ and co-stabilizes $V$.
\end{enumerate}
\label{def:eeq}
\end{defn}

\begin{remark}
\begin{enumerate}
\item Note that  $\equiv$ and $\approx$ are equivalence relations on the set of all Rota-Baxter Lie algebras structures on $E$ containing $\rblie$ as a Rota-Baxter Lie subalgebra and we denote by $Extd (E,\rblie)$ (resp. $Extd'(E,\rblie)$) the set of all equivalence classes via $\equiv$ (resp. $\approx$).

\item  By Definition~\ref{def:eeq}, $Extd(E,\rblie)$ gives the set of all isomorphism classes of Rota-Baxter Lie algebras on $E$ that stabilizes $\rblie$ and $Extd'(E,\rblie)$ gives the set of all isomorphism classes of Rota-Baxter Lie algebras on $E$ from the point of extension theory.
\item Since any two cohomologous Rota-Baxter Lie algebras structures on $E$ are equivalent, there is a canonical projection
\begin{align*}
Extd'(E,\rblie) \rightarrow Extd(E,\rblie).
\end{align*}
\end{enumerate}
\end{remark}

Generalizing the extending datum of Lie algebras through vector spaces~\cite{AM14}, we give the following notion.
\begin{defn}
Let $\rblie$ be a Rota-Baxter Lie algebra and let $V$ be a vector space.
\begin{enumerate}
\item An {\bf extending datum} of $\rblie$ through $V$ is a system $$\Omega(\rblie, V)=(\tril, \trir,f, \{-,- \}, P_1, P_2)$$ consisting of four bilinear maps
\begin{align*}
\tril: V \times \lie \rightarrow V, \, \trir: V \times \lie \rightarrow \lie, \, f: V \times V \rightarrow \lie, \, \{-,-\}: V \times V \rightarrow V
\end{align*}
and two linear maps
\begin{align*}
P_1: V \rightarrow \lie, \, P_2: V \rightarrow V.
\end{align*}

\item Let $\Omega(\rblie, V)=(\tril, \trir, f, \{-, -\}, P_1, P_2)$ be an extending datum of $\rblie$ through $V$. Denote by $\gv$ the vector space $\lie \times V$ with a bilinear map $[-, -] : (\lie \times V)\times (\lie \times V) \rightarrow \lie \times V$ defined by
\begin{align}
[(g, x), (h, y)]:= ([g, h]+x \trir h-y \trir g+ f(x, y), \{x, y\}+x \tril h-y \tril g)
\label{eq:uniproduct}
\end{align}
and a linear map $\np: \lie \times V \rightarrow \lie \times V$ defined by
\begin{align}
\np((g,x)) :=(\plie(g)+P_1(x), P_2(x)),
\label{eq:unilie}
\end{align}
for all $g, h \in \lie$ and $x, y \in V$. The object $\gv$ is called a {\bf unified product} of $\rblie$ and $V$ if it is a Rota-Baxter Lie algebra with the maps given by Eqs.~(\ref{eq:uniproduct}) and (\ref{eq:unilie}).
\item The extending datum $\Omega(\rblie, V)=(\tril, \trir, f, \{-, -\ \},P_1, P_2)$ is called a {\bf Rota-Baxter Lie extending structure} of $\rblie$ through $V$ if $\gv$ is a unified product of $\rblie$ and $V$.
\item Denote by $\mathcal{L}(\rblie, V)$ the set of all Rota-Baxter Lie extending structures of $\rblie$ through $V$.
\end{enumerate}
\end{defn}
Now we provide an equivalent characterization of the unified product $\gv$ of $\rblie$ and $V$ as follows.

\begin{theorem}
Let $\rblie$ be a Rota-Baxter Lie algebra. Let $V$ be a vector space and $\Omega(\rblie, V)$ an extending datum of $\rblie$ through $V$. Then the following statements are equivalent:
\begin{enumerate}
\item $\gv$ is a unified product of $\rblie$ and $V$;

\item For any $g, h \in \lie$ and $x,y, z \in V$,
\begin{enumerate}
\item \label{it:1} $f(x,x)=0, \, \{x,x\}=0$;

\item \label{it:2} $(V,\tril)$ is a right $\lie$-module;

\item \label{it:3} $x \trir [g,h]=[x \trir g, h]+[g, x \trir h]+(x \tril g) \trir h-(x \tril h) \trir g$;

\item \label{it:4} $\{x,y \} \tril g=\{x, y \tril g \}+\{ x \tril g, y\}+ x \tril(y \trir g)-y \tril (x \trir g)$;

\item \label{it:5} $\{x,y \} \trir g=x \trir (y \trir g)- y\trir (x \trir g)+[g,f(x,y)]+f(x,y \tril g)+f(x \tril g,y)$;

\item \label{it:6} $f(x, \{y,z \})+f(y,\{z,x \})+f(z, \{x,y \})+x \trir f(y,z)+ y \trir f(z,x)+ z \trir f(x,y)=0$;

\item \label{it:7} $\{x, \{y,z \} \}+\{y, \{ z, x\} \}+\{ z, \{ x,y\} \}+x \tril f(y,z)+ y \tril f(z,x)+ z \tril f(x,y)=0$;

\item \label{it:8} $[\plie(g), P_1(y)]-P_2(y) \trir \plie(g)+\plie(y \trir \plie(g))
+P_1(y \tril \plie(g))\\
- \plie([\lie, P_1(y)])
    + \plie(P_2(y) \trir g)
    +P_1(P_2(y) \tril g)+ \lambda \plie(y \trir g)\\
    + \lambda P_1(y \tril g)
    =0$;

\item \label{it:9} $P_2(y) \tril \plie(g)-P_2(y \tril \plie(g))-P_2(P_2(y) \tril g)- \lambda P_2(y \tril g)=0$;

\item \label{it:10} $[P_1(x),P_1(y)]+P_2(x) \trir P_1(y)-P_2(y) \trir P_1(x)+f(P_2(x),P_2(y))\\
    +\plie(y \trir P_1(x))
    -\plie(f(P_2(x),y))
    -P_1(\{P_2(x),y \})+P_1(y \tril P_1(x))\\
    - \plie(x \trir P_1(y))- \plie(f(x,P_2(y)))
    -P_1(\{ x, P_2(y)\})
    -P_1(x \tril P_1(y))\\
    - \lambda \plie(f(x,y))-\lambda P_1 (\{x,y \})
    =0$;

\item \label{it:11} $\{P_2(x),P_2(y) \}+P_2(x) \tril P_1(y)-P_2(y) \tril P_1(x)-P_2(\{P_2(x),y \})\\
    +P_2(y \tril P_1(x))-P_2(\{x,P_2(y) \})-P_2(x \tril P_1(y))-\lambda P_2(\{ x,y\})
    =0$.

\end{enumerate}
\end{enumerate}
\label{thm:cha}
\end{theorem}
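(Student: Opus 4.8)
The plan is to use the fact that $\gv$ is a unified product precisely when the bracket defined by Eq.~(\ref{eq:uniproduct}) makes $\lie\times V$ into a Lie algebra \emph{and} the operator $\np$ of Eq.~(\ref{eq:unilie}) is a Rota-Baxter operator of weight $\lambda$ for this bracket, that is, $[\np(X),\np(Y)]=\np([\np(X),Y]+[X,\np(Y)]+\lambda[X,Y])$ for all $X,Y\in\lie\times V$. Since the bracket is bilinear, $\np$ is linear, and $E=\lie\times V$, every identity to be checked is multilinear, so it suffices to verify it on \emph{pure} arguments, each taken from $\lie\times 0$ or $0\times V$; I will write $g$ for $(g,0)$ and $x$ for $(0,x)$. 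The key simplification is that on pure arguments the bracket collapses: $[g,h]=([g,h],0)$, $[x,g]=(x\trir g,\,x\tril g)$, and $[x,y]=(f(x,y),\{x,y\})$, since $f$, $\{-,-\}$, $\trir$ and $\tril$ are bilinear and hence vanish whenever an argument is $0$. Each resulting identity in $\lie\times V$ then splits into its $\lie$-component and its $V$-component, producing the eleven conditions in pairs.

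First I would dispose of the Lie algebra axioms. Evaluating $[(g,x),(g,x)]=(f(x,x),\{x,x\})$ shows that the bracket is alternating exactly when \ref{it:1} holds; since an alternating bilinear map is skew-symmetric, this also makes $f$ and $\{-,-\}$ skew. For the Jacobi identity I would run through the four types of pure triples. A triple from $\lie$ is automatic because $\rblie$ is a Lie subalgebra. A triple with two entries in $\lie$ and one in $V$ gives, after separating components, the right $\lie$-module axiom \ref{it:2} (the $V$-component) and \ref{it:3} (the $\lie$-component). A triple with one entry in $\lie$ and two in $V$ yields \ref{it:4} and \ref{it:5}, and a triple with all three entries in $V$ yields \ref{it:6} and \ref{it:7}; in the last two cases the skew-symmetry of $f$ and $\{-,-\}$ from \ref{it:1} is used to bring the cyclic sums into the stated form.

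Next I would treat the Rota-Baxter condition. Because swapping $X$ and $Y$ negates both sides of the operator identity, it is enough to test the pairs $(g,h)$, $(g,x)$ and $(x,y)$. The pair $(g,h)$ recovers the Rota-Baxter identity of $\rblie$ and so holds automatically. Expanding $\np$ on the mixed pair $(g,x)$—that is, computing $[\np(g),\np(x)]$ and $\np([\np(g),x]+[g,\np(x)]+\lambda[g,x])$ with $\np(g)=(\plie(g),0)$ and $\np(x)=(P_1(x),P_2(x))$—and comparing components gives \ref{it:8} (the $\lie$-component) and \ref{it:9} (the $V$-component). The pair $(x,y)$ gives \ref{it:10} and \ref{it:11} in the same way. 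Conversely, granting all eleven identities, multilinearity reassembles them into the full Lie and Rota-Baxter axioms on $\gv$, which yields the reverse implication.

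The main obstacle is one of bookkeeping rather than ideas: the heaviest conditions \ref{it:8} and \ref{it:10} require pushing $\np$ through a sum of several mixed terms and keeping track of the many $f$-, $\trir$- and $\tril$-cross terms produced by Eq.~(\ref{eq:uniproduct}). Here one uses repeatedly that $P_1,P_2$ are linear, that $f$ and $\{-,-\}$ are bilinear (so that terms with a $0$ entry drop out), and that $f,\{-,-\}$ are skew-symmetric by \ref{it:1}. Organizing the expansion so that the $\lie$- and $V$-components are collected separately at each step is what keeps the computation tractable.
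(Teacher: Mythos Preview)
Your proposal is correct and follows essentially the same approach as the paper: reduce to pure elements of $\lie\times\{0\}$ and $\{0\}\times V$, split each identity into its $\lie$- and $V$-components, and check the Rota-Baxter relation on the three types of pairs, with the $(g,h)$ case automatic from $\plie$ being a Rota-Baxter operator. The only difference is that the paper simply cites \cite[Theorem~3.2]{AM14} for the equivalence of the Lie algebra axioms with conditions \ref{it:1}--\ref{it:7}, whereas you outline the direct verification; your sketch of that part is accurate and matches what that reference does.
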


\begin{proof}
By Theorem 3.2 of~\cite{AM14}, $\gv$ is a Lie algebra under the product given by~(\ref{eq:uniproduct}) if and only if Eqs.~\ref{it:1}-\ref{it:7} hold. Next we show that the operator $\np$ given by Eq.~(\ref{eq:unilie}) is a Rota-Baxter operator if and only if Eqs.~\ref{it:8}-\ref{it:11} hold. Since $(g, x)=(g,0)+(0, x)$ in $\gv$ and $\np$ given by Eq.~(\ref{eq:unilie}) is a linear map, we only need to consider the action of $\np$ on all generators of $\gv$, i.e. the set $\{(g, 0) | g \in \lie \} \cup \{(0, x) | x \in V \}$. As $[-,-]$ is antisymmetric, there are three steps to consider. First, $\np$ is a Rota-Baxter operator for any $(g,0),(h,0)$:
\begin{align*}
&\ [\np((g,0)), \np((h,0))]-\np\Big([\np(g,0), (h,0)]+[(g,0), \np(h,0)]+ \lambda [(g,0), (h,0)]\Big)\\
=&\ [(\plie(g),0), (\plie(h),0)]-\np\Big([(\plie(g),0),(h,0)]+[(g,0), (\plie(h),0)]+ \lambda[(g,0),(h,0)]\Big)\\
=&\ ([\plie(g), \plie(h)],0)-\Big( \plie([\plie(g),h]+[g, \plie(h)]+ \lambda [g,h]),0\Big) \quad(\text{by Eqs.~(\ref{eq:uniproduct})-(\ref{eq:unilie})})\\
=&\ 0  \quad  \text{(As $\plie$ being a Rota-Baxter operator of $\lie$)}.
\end{align*}
Next we prove that $\np$ is a Rota-Baxter operator for any $(g,0),(0,y)$ if and only if Eqs.~(\ref{it:8})-(\ref{it:9}) hold. Indeed,
\begin{align*}
&\ [\np((g,0)), \np((0,y))]-\np\Big([\np(g,0), (0,y)]+[(g,0), \np(0,y)]+ \lambda [(g,0), (0,y)]\Big)\\
=&\ [(\plie(g),0),(P_1(y), P_2(y))]-\np \Big([(\plie(g),0),(0,y)]+[(g,0),(P_1(y),P_2(y))]+ \lambda [(g,0), (0,y)] \Big)\\
& \hspace{11cm}(\text{by Eq.~(\ref{eq:unilie})})\\
=&\ \bigg([\plie(g), P_1(y)]- P_2(y) \trir \plie(y), -P_2(y) \tril \plie(g) \bigg)-\np \bigg(\Big(-y \trir \plie(g), -y \tril \plie(g)\Big)\\
&\ + \Big([g, P_1(y)]-P_2(y) \trir g, -P_2(y) \tril g\Big)+ \lambda \Big(-y \trir g,-y \tril g\Big) \bigg)\quad(\text{by Eq.~(\ref{eq:uniproduct})})\\
=&\ \bigg([\plie(g), P_1(y)]- P_2(y) \trir \plie(y), -P_2(y) \tril \plie(g) \bigg)\\
&\ -\bigg(\plie(-y \trir \plie(g))+P_1(-y \tril \plie(g)),P_2(-y \tril \plie(g)) \bigg)\\
&\ - \bigg(\plie([g, P_1(y)])-\plie(P_2(y) \trir g)+P_1(-P_2(y) \tril g), P_2(-P_2(y) \tril g) \bigg)\\
&\ - \lambda \Big(\plie(-y \trir g)+P_1(-y \tril g), P_2(-y \tril g) \Big)\quad(\text{by Eq.~(\ref{eq:unilie})})\\
=&\ \bigg([\plie(g),P_1(y)]-P_2(y) \trir \plie(g)+ \plie(y \trir \plie (g))+P_1( y \tril \plie(g))- \plie([g, P_1(y)])\\
&\ + \plie (P_2(y) \trir g)
 +P_1(P_2(y) \tril g)+ \lambda \plie(y \trir g)+ \lambda P_1(y \tril g),\\
 &\ P_2(y) \tril \plie(g)-P_2(y \tril \plie(g))-P_2(P_2(y) \tril g)
 -\lambda P_2(y \tril g) \bigg).
\end{align*}
Hence $\np$ is a Rota-Baxter operator for $(g,0), (0,y)$ if and only if Eqs.~\ref{it:8}-\ref{it:9} hold. Finally, we prove that $\np$ is a Rota-Baxter operator for $(0,x),(0,y)$ if and only if Eqs.~\ref{it:10}-\ref{it:11} hold. Indeed,
\begin{align*}
&\ [\np((0,x)), \np((0,y))]-\np \bigg([\np((0,x)),(0,y)]+[(0,x), \np((0,y))]+ \lambda [(0,x), (0,y)] \bigg)\\
=&\ \left[\Big(P_1(x),P_2(x)\Big), \Big(P_1(y), P_2(y)\Big)\right]-\np \bigg(\left[\Big(P_1(x),P_2(x)\Big), (0,y)\right]+\left[(0,x), \Big(P_1(y),P_2(y)\Big)\right]\\
&\ + \lambda [(0,x), (0,y)] \bigg)\\
=&\ \bigg([P_1(x),P_1(y)]+P_2(x) \trir P_1(y)-P_2(y) \trir P_1(x)+f\Big(P_2(x), P_2(y)\Big), \{P_2(x), P_2(y) \}\\
&\ +P_2(x) \tril P_1(y)
-P_2(y) \tril P_1(x) \bigg)-\np \bigg(\Big(-y \trir P_1(x)+f(P_2(x),y), \{P_2(x),y \}-y \tril P_1(x)\Big)\\
&\ +\Big(x \trir P_1(y)
 +f(x,P_2(y)), \{ x,P_2(y)\}+ x\tril P_1(y)\Big)+ \lambda \Big(f(x,y), \{ x,y\}\Big) \bigg)\\
=&\ \bigg([P_1(x),P_1(y)]+P_2(x) \trir P_1(y)-P_2(y) \trir P_1(x)+f(P_2(x), P_2(y)), \{P_2(x), P_2(y) \}\\
&\ +P_2(x) \tril P_1(y)
-P_2(y) \tril P_1(x) \bigg)-\bigg(\plie(-y \trir P_1(x))+ \plie(f(P_2(x),y))+P_1(\{P_2(x),y \})\\
&\ -P_1(y \tril P_1(x)), P_2(\{P_2(x),y \})
 -P_2(y \tril P_1(x)) \bigg)-\bigg(\plie(x \trir P_1(y))+ \plie(f(x, P_2(y)))\\
&\ +P_1(\{x, P_2(y) \})+P_1(x \tril P_1(y)),
 P_2(\{x, P_2(y) \}) + P_2(x \tril P_1(y)) \bigg)\\
&\ -\lambda \bigg(\plie(f(x,y))+P_1(\{ x,y\}), P_2(\{ x,y\}) \bigg)\\
=&\ \bigg([P_1(x),P_1(y)]+P_2(x) \trir P_1(y)-P_2(y) \trir P_1(x)+ f(P_2(x), P_2(y))+ \plie (y \trir P_1(x))\\
&\ - \plie (f(P_2(x), y))- P_1(\{P_2(x),y \})+P_1(y \tril P_1(x))- \plie(x \trir P_1(y))- \plie (f(x, P_2(y)))\\
&\ -P_1(\{ x,P_2(y)\})-P_1(x \tril P_1(y))- \lambda \plie (f(x,y))- \lambda P_1(\{ x,y\}), \{P_2(x),P_2(y) \}+ P_2(x) \tril P_1(y)\\
&\ -P_2(y) \tril P_1(x)-P_2(\{P_2(x), y \})+P_2(y \tril P_1(x))-P_2(\{x, P_2(y) \})-P_2(x \tril P_1(y))-\lambda P_2(\{x,y \}) \bigg).
\end{align*}
Hence $\np$ is a Rota-Baxter operator for $(0,x), (0,y)$ if and only if Eqs.~\ref{it:10} and \ref{it:11} hold. This completes the proof.
\end{proof}

\begin{remark}
Note that Eqs.~\ref{it:2} and \ref{it:11} show that $(V,P_2)$ is a right $\rblie$-module under the action $\tril$.
\end{remark}

Let $\Omega(\rblie, V)=(\tril, \trir, f,\{-, -\}, P_1, P_2)$ be a Rota-Baxter Lie extending structure and $\gv$ the associated unified product. Then there is a canonical inclusion
\begin{align*}
i_{\lie}: \lie \rightarrow  \gv , \quad i_{\lie}(g)=(g, 0),  \text{ \ for any \ } g\in \lie.
\end{align*}
Note that $i_{\lie} \circ P_\lie  =\np \circ i_\lie$.
Then $i_{\lie}$
is an injective morphism of Rota-Baxter Lie algebras. Hence $\rblie$ can be viewed as a Rota-Baxter Lie subalgebra of $\gv$  through the identification $\lie \cong i_{\lie}(\lie) \cong \lie \times \{0 \}$. Next we will prove that any Rota-Baxter Lie algebra structure on a vector space $E$ containing $\rblie$ as a Rota-Baxter Lie subalgebra is isomorphic to a unified product of $\rblie$ through a vector space.

Now we arrive at our first main result in this section.
\begin{theorem}
Let $\rblie$ be a Rota-Baxter Lie algebra. Let $E$ be a vector space containing $\lie$ as a subspace and $(E, [-,-], \pe)$ a Rota-Baxter Lie algebra such that $\rblie$ is a Rota-Baxter Lie subalgebra of $(E,[-, -],\pe)$. Then there exists a Rota-Baxter Lie extending structure $$\Omega(\rblie, V)= (\tril, \trir, f,\{-, -\}, P_1,P_2)$$ of $\rblie$ through a subspace $V$ of $E$ and an isomorphism of Rota-Baxter Lie algebras $$(E,\pe) \cong (\gv, \np)$$ that stabilizes $\rblie$ and co-stabilizes $V$.
\label{thm:indu}
\end{theorem}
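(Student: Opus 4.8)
The plan is to fix a linear complement of $\lie$ in $E$ and then \emph{transport} the entire Rota-Baxter Lie structure of $E$ back to $\gv$ along the obvious identification, rather than verifying the eleven conditions of Theorem~\ref{thm:cha} by hand.

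\textbf{Setting up the datum.} Since $\lie$ is a subspace of $E$, I first choose a complement $V$ of $\lie$ in $E$, so that $E=\lie\oplus V$ as vector spaces, and let $p:E\to\lie$ and $q:E\to V$ be the associated projections, so that every $e\in E$ is written uniquely as $e=p(e)+q(e)$. I then define the six maps of $\Omega(\rblie,V)$ by reading off the two components of the bracket and of $\pe$ on $V$:
\begin{align*}
x\trir g:=p([x,g]),\quad x\tril g:=q([x,g]),\quad f(x,y):=p([x,y]),\quad \{x,y\}:=q([x,y]),
\end{align*}
for $x,y\in V$ and $g\in\lie$, together with $P_1(x):=p(\pe(x))$ and $P_2(x):=q(\pe(x))$. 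Bilinearity of the four products and linearity of $P_1,P_2$ are immediate from the bilinearity of $[-,-]$ and the linearity of $\pe,p,q$.

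\textbf{The isomorphism and structure transport.} Define $\varphi:\gv\to E$ by $\varphi((g,x))=g+x$; since $E=\lie\oplus V$ this is a linear isomorphism with inverse $e\mapsto(p(e),q(e))$. The key step is to check that $\varphi$ intertwines the maps of Eqs.~(\ref{eq:uniproduct})--(\ref{eq:unilie}) with $[-,-]$ and $\pe$. For the bracket, expanding in $E$ and using that $\lie$ is a subalgebra together with antisymmetry gives
\begin{align*}
[g+x,\,h+y]=[g,h]+[x,h]-[y,g]+[x,y],
\end{align*}
and substituting the definitions above shows that the $\lie$-component equals $[g,h]+x\trir h-y\trir g+f(x,y)$ while the $V$-component equals $\{x,y\}+x\tril h-y\tril g$; these are exactly the two components matching Eq.~(\ref{eq:uniproduct}). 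Likewise, using $\pe|_\lie=\plie$, one gets $\pe(g+x)=\plie(g)+P_1(x)+P_2(x)$, matching Eq.~(\ref{eq:unilie}). Thus $\varphi$ carries the unified-product bracket and operator precisely onto $[-,-]$ and $\pe$.

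\textbf{Conclusion.} Because $(E,[-,-],\pe)$ is a Rota-Baxter Lie algebra and $\varphi$ is a linear isomorphism transporting its bracket and operator onto the maps~(\ref{eq:uniproduct})--(\ref{eq:unilie}), the space $\gv$ endowed with those maps is itself a Rota-Baxter Lie algebra; hence $\Omega(\rblie,V)$ is a Rota-Baxter Lie extending structure and $\varphi:(\gv,\np)\to(E,\pe)$ is an isomorphism of Rota-Baxter Lie algebras. Finally, $\varphi((g,0))=g$ gives $\varphi\circ i_\lie=i$, so $\varphi$ stabilizes $\rblie$, while $q(\varphi((g,x)))=q(g+x)=x$ shows that $\pi\circ\varphi$ equals the canonical projection $\gv\to V$, so $\varphi$ co-stabilizes $V$. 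I expect the only delicate point to be the bookkeeping: keeping the sign conventions of $\trir$ and $\tril$ consistent with the antisymmetrized bracket of Eq.~(\ref{eq:uniproduct}), and making sure the subalgebra hypothesis is invoked in both places where it is needed, namely $[g,h]\in\lie$ and $\pe(g)=\plie(g)$ for $g,h\in\lie$. The conceptual shortcut is that no direct check of conditions~(\ref{it:1})--(\ref{it:11}) is required, since the property of being a Rota-Baxter Lie algebra is preserved under transport along the linear bijection $\varphi$.
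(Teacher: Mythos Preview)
Your proof is correct and follows essentially the same approach as the paper's: choose a complement $V$ via a projection $p$ onto $\lie$, define the six maps of the datum by splitting $[x,g]$, $[x,y]$, and $\pe(x)$ into their $\lie$- and $V$-components, and then transport the Rota-Baxter Lie structure of $E$ along the linear isomorphism $\varphi(g,x)=g+x$. The paper outsources the Lie-algebra half of the verification to \cite[Theorem~3.4]{AM14} and checks only the Rota-Baxter operator explicitly, whereas you carry out both checks directly, but the underlying idea and the definitions of the datum are identical (your $q$ is the paper's $\mathrm{id}_E-p$).
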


\begin{proof}
Since $\bfk$ is a field, we have a linear map $p : E \rightarrow \lie$ such that $p(g)=g$, for all $g \in \lie$. Define $V:=\mathrm{ker}(p)$. Then $V$ is a complement of $\lie$ in $E$, $p \circ p=p$ and $p \circ (id_E-p)=0$. Now define the extending datum of $\rblie$ through $V$ as follows:
\begin{align*}
\trir:&\ V \times \lie \rightarrow \lie, \quad x \trir g:=p([x,g]), \\
\tril:&\  V \times \lie \rightarrow V, \quad x \tril g:=[x,g]-p([x,g]),\\
f:&\  V \times V \rightarrow \lie, \quad f(x,y):=p([x, y]),\\
\{,\}:&\ V \times V \rightarrow V, \quad  \{x, y\} := [x, y] - p([x, y]),\\
P_1:&\  V \rightarrow \lie, \quad P_1(x)=p(\pe(x)),\\
P_2:&\  V \rightarrow V, \quad P_2(x)=\pe(x)-p(\pe(x)),
\end{align*}
for any $g \in \lie$ and $x,y \in V$. Note that by definition of the map $p$, for any $g \in \lie$ and $x, y \in V$, $x \tril g \in V$, $\{x, y\} \in V$ and $P_2(x) \in V$. Hence the above maps are all well-defined linear maps or bilinear maps. Now we prove that $\Omega(\rblie, V)=(\tril, \trir, f, \{-, -\},P_1, P_2)$ is a Rota-Baxter Lie extending structure of $\rblie$ through $V$ and
\begin{align*}
\phi: \gv \rightarrow E,  \quad \phi(g, x):= g+x
\end{align*}
is an isomorphism of Rota-Baxter Lie algebras that stabilizes $\lie$ and co-stabilizes $V$. Analogue to the proof of Theorem~3.4 in~\cite{AM14},
$$\phi: \lie \times V \rightarrow E, \quad \phi(g, x) := g + x$$
is a linear isomorphism between the vector space $\lie \times V$ and the Lie algebra $E$ with the inverse given by
\begin{align*}
\phi^{-1}(y):= (p(y), y - p(y)),  \text{ \ for all \ } y \in E.
\end{align*}
As $\phi$ is a linear isomorphism, there exists a unique Rota-Baxter Lie algebra structure on $\lie \times V$ with  the Rota-Baxter operator and the product on $\lie \times V$  given by
\begin{align}
\np((g,x)):=\phi^{-1}(\pe(\phi(g,x))), \quad  [(g, x), (h, y)] := \phi^{-1}([\phi(g, x), \phi(h, y)]),
\label{eq:newone}
\end{align}
for all $g, h \in \lie$ and $x, y \in V$,
such that $\phi$ is an isomorphism of Rota-Baxter Lie algebras. In fact, by the proof of Theorem~3.4 in~\cite{AM14}, the product in Eq.~(\ref{eq:newone}) coincides with the one defined by Eq.~(\ref{eq:uniproduct}). Hence, it remains to prove that the Rota-Baxter operator in Eq.~(\ref{eq:newone}) coincides with the one defined by Eq.~(\ref{eq:unilie}). Indeed, for any $g \in \lie$ and $x \in V$, we have
\begin{align*}
&\ \np((g,x)) =\phi^{-1}(\pe(\phi(g,x)))=\phi^{-1}(\pe(g+x))\\
=&\ \phi^{-1}(\pe(g)+\pe(x))=\phi^{-1}(\plie(g)+\pe(x)) \\
&\ \quad \quad \quad \quad \text{(by $\rblie$ being a Rota-Baxter Lie subalgebra of $(E,\pe)$)}\\
=&\ \phi^{-1}(\plie(g)+ p(\pe(x))+ \pe(x)-p(\pe(x)))\\
=&\ (\plie(g)+p(\pe(x)), \pe(x)-p(\pe(x))) \quad \text{(by $p \circ p=p$ and $p \circ (id_E-p)=0$)}\\
=&\ (\plie(x)+P_1(x), P_2(x)).
\end{align*}
Finally, we can check the following diagram is commutative
\begin{align*}
\xymatrix{
  \rblie \ar[d]_{id_{\lie}} \ar[r]^{i_{\lie}} & (\gv, \np) \ar[d]_{\phi} \ar[r]^{q} & V \ar[d]^{id_V} \\
  \rblie \ar[r]^{i_{\lie}} & (E,P_E) \ar[r]^{\pi} & V   }
\end{align*}
where $\pi: E \rightarrow V$ is the projection of $E =\lie + V$ on the vector space $V$ and $q : \gv \rightarrow V, q(g,x):=x$ is the canonical projection. Hence $\phi$ stabilizes $\rblie$ and $V$. This completes the proof.
\end{proof}

\begin{remark}
By Theorem~\ref{thm:indu}, any Rota-Baxter Lie algebra structure on a vector space $E$ containing $\rblie$ as a Rota-Baxter Lie subalgebra is isomorphic to a unified product of $\rblie$ through a given complement $V$ of $\lie$ in $E$. Hence the classification of all Rota-Baxter Lie algebra structures on $E$ that contains $\rblie$ as a Rota-Baxter Lie subalgebras is equivalent to the classification of all unified products $\gv$, associated to all Rota-Baxter Lie extending structures $\Omega(\rblie, V)=(\tril, \trir, f, \{-,- \}, P_1, P_2)$, for a given complement $V$ of $\lie$ in $E$.
\end{remark}

To classify all unified products $\gv$, we need the following lemma.

\begin{lemma}
Let $\Omega(\rblie, V)=(\tril, \trir, f, \{-,- \}, P_1, P_2)$, $\Omega'(\rblie, V)=(\tril', \trir', f', \{-,-\}', P'_1, P'_2)$ be two Rota-Baxter Lie extending structures of $\rblie$ through $V$ and let $\gv, \lie \natural' V$ be their corresponding  associated unified products. Then there exists a bijection between the set of all morphisms of Rota-Baxter Lie algebras $\psi: \gv \rightarrow \lie \natural' V$ which stabilizes $\rblie$, and the set of pairs $(r,v)$, where $r: V \rightarrow \lie$, $v: V \rightarrow V$ are two linear maps satisfying the following conditions: for any $g \in \lie$, $x, y \in V$,
\begin{enumerate}
\item \label{it:a1} $v(x) \tril' g=v(x \tril g)$;

\item \label{it:a2} $r(x \tril g)=[r(x), g]-x \trir g+v(x) \trir' g$;

\item \label{it:a3} $v(\{x,y \})=\{ v(x), v(y)\}'+v(x) \tril' r(y)-v(y) \tril' r(x)$;

\item \label{it:a4} $r(\{ x,y \})=[r(x), r(y)]+ v(x) \trir' r(y)-v(y) \trir' r(x)+f'(v(x), v(y))-f(x,y)$;

\item \label{it:a5} $P_1(x)=\plie(r(x))+P'_1(v(x))-r(P_2(x))$;

\item \label{it:a6} $v(P_2(x))=P'_2(v(x))$.
\end{enumerate}
Under the above bijection, the morphism of Rota-Baxter Lie algebras $\psi=\psi_{(r,v)}: \gv \rightarrow \lie \natural' V$ corresponding to $(r,v)$ is given by
\begin{align*}
\psi(g,x)=(g+r(x), v(x)),
\end{align*}
for any $g \in \lie$ and $x \in V$. Moreover, $\psi= \psi_{(r,v)}$ is an isomorphism of Rota-Baxter Lie algebras if and only if $v : V \rightarrow V$ is a bijection and $\psi= \psi_{(r,v)}$ co-stabilizes $V$ if and only if $v= id_V$.
\label{lem:iso}
\end{lemma}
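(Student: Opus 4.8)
The plan is to exploit the fact that stabilizing $\rblie$ rigidly determines the shape of $\psi$, and then to split the requirement "$\psi$ is a morphism of Rota-Baxter Lie algebras" into its two independent halves: preserving the bracket, and commuting with the operators. A linear map $\psi$ stabilizes $\rblie$ exactly when $\psi(g,0)=(g,0)$ for all $g\in\lie$. Writing $\psi(0,x)=(r(x),v(x))$ for uniquely determined linear maps $r:V\to\lie$ and $v:V\to V$, linearity forces
\begin{align*}
\psi(g,x)=\psi(g,0)+\psi(0,x)=(g+r(x),\,v(x)).
\end{align*}
Conversely, for any pair $(r,v)$ of linear maps the formula $\psi_{(r,v)}(g,x)=(g+r(x),v(x))$ defines a linear map that automatically stabilizes $\rblie$. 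This gives the claimed bijection at the level of $\rblie$-stabilizing linear maps, and it remains only to decide when $\psi_{(r,v)}$ respects both structures.

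Next I would treat the Lie-algebra morphism condition $\psi([(g,x),(h,y)])=[\psi(g,x),\psi(h,y)]'$. Since both brackets are given by Eq.~(\ref{eq:uniproduct}) and every term is bilinear, it suffices to test the identity on the generators $(g,0)$ and $(0,x)$ and to compare the $\lie$- and $V$-components separately. Setting $y=0$ isolates the conditions \ref{it:a1} (from the $V$-component) and \ref{it:a2} (from the $\lie$-component), while setting $g=h=0$ isolates \ref{it:a3} (from the $V$-component) and \ref{it:a4} (from the $\lie$-component); the pure $\lie$-$\lie$ part is automatic because $\psi$ fixes $\lie$. This is precisely the decomposition carried out for ordinary Lie algebras in \cite{AM14}, so rather than redo the bookkeeping I would invoke that result to conclude that $\psi_{(r,v)}$ preserves the bracket if and only if \ref{it:a1}--\ref{it:a4} hold.

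It then remains to characterize when $\psi_{(r,v)}\circ\np=\np'\circ\psi_{(r,v)}$, which is the genuinely new ingredient. Using Eq.~(\ref{eq:unilie}) for both products, a direct evaluation on a generator $(g,x)$ gives
\begin{align*}
\psi_{(r,v)}(\np(g,x))&=\big(\plie(g)+P_1(x)+r(P_2(x)),\,v(P_2(x))\big),\\
\np'(\psi_{(r,v)}(g,x))&=\big(\plie(g)+\plie(r(x))+P'_1(v(x)),\,P'_2(v(x))\big).
\end{align*}
Comparing the $\lie$-components and cancelling $\plie(g)$ yields exactly \ref{it:a5}, while comparing the $V$-components yields exactly \ref{it:a6}. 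Hence $\psi_{(r,v)}$ commutes with the operators if and only if \ref{it:a5}--\ref{it:a6} hold, and combining this with the bracket condition establishes the stated bijection.

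Finally I would settle the isomorphism and co-stabilization claims, both immediate from the triangular shape of $\psi_{(r,v)}$. As a linear endomorphism of $\lie\times V$ it is block upper-triangular with diagonal blocks $\mathrm{id}_\lie$ and $v$, so it is bijective if and only if $v$ is bijective; in that case its linear inverse is again of the $\rblie$-stabilizing form, so a bijective morphism is automatically an isomorphism of Rota-Baxter Lie algebras, giving the equivalence with $v$ being a bijection. For co-stabilization, the projection comparison $q'\circ\psi_{(r,v)}=q$ reads $v(x)=x$ for all $x\in V$, i.e.\ $v=\mathrm{id}_V$. The only delicate point—and the main obstacle—is the clean separation of the bilinear morphism identity into the four bracket conditions by restricting to generators; since that separation is exactly the Lie-algebra computation of \cite{AM14}, the Rota-Baxter compatibility \ref{it:a5}--\ref{it:a6} is the sole new calculation and is short.
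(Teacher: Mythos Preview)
Your proposal is correct and follows essentially the same approach as the paper: both invoke \cite{AM14} for the Lie-algebra morphism part (conditions \ref{it:a1}--\ref{it:a4}) and then verify the Rota-Baxter compatibility $\psi\circ\np=\np'\circ\psi$ by a direct computation yielding \ref{it:a5}--\ref{it:a6}. The only cosmetic differences are that you compute on a general $(g,x)$ at once while the paper splits into $(g,0)$ and $(0,x)$, and you spell out the block-triangular argument for bijectivity whereas the paper again cites \cite{AM14}.
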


\begin{proof}
By the proof of Lemma~3.5 in~\cite{AM14}, a linear map $\psi: \gv \rightarrow \lie \natural' V$ which stabilizes $\lie$ is uniquely determined by two linear maps $r: V \rightarrow \lie$ and $v: V \rightarrow V$ such that $\psi(g,x)=(g+r(x), v(x))$ for all $g\in \lie$ and $x \in V$. Moreover, $\psi$ is a morphism of Lie algebras if and only if the conditions~\ref{it:a1}-\ref{it:a4} hold. Denote by $\np$ and $\np'$ the Rota-Baxter operators associated to $\gv$ and $\lie \natural' V$. Next, we show that $\psi$ is compatible with the Rota-Baxter operators, i.e. $\psi \circ \np=\np' \circ \psi$, if and only if the conditions~\ref{it:a5}-\ref{it:a6} hold. For $g, h \in \lie$, we have
\begin{align*}
\psi \circ \np ((g,0))=\psi((\plie(g),0))=(\plie(g),0)= \np'((g,0))=\np' \circ \psi((g,0)).
\end{align*}
$\psi$ is compatible with the Rota-Baxter operators under this case. For $x \in V$, we have
\begin{align*}
&\ \psi \circ \np((0,x))-\np' \circ \psi((0,x))=\psi((P_1(x), P_2(x)))-\np'((r(x),v(x)))\\
=&\ (P_1(x)+r(P_2(x)), v(P_2(x)))-(\plie(r(x))+P'_1(v(x)), P'_2(v(x))).
\end{align*}
Hence $\psi \circ \np((0,x))=\np' \circ \psi((0,x))$ if and only if the conditions~\ref{it:a5}-\ref{it:a6} hold.

Moreover, by the  proof of Lemma~3.5 in~\cite{AM14}, $\psi= \psi_{(r,v)}$ is an isomorphism if and only if $v : V \rightarrow V$ is a bijection and $\psi= \psi_{(r,v)}$ co-stabilizes $V$ if and only if $v= id_V$.
\end{proof}

By Lemma~\ref{lem:iso}, we give the following definition.
\begin{defn}\label{def:equiv}
Let $\rblie$ be a Rota-Baxter Lie algebra and let $V$ be a vector space. Two Rota-Baxter Lie extending structures of $\rblie$ through $V$, $\Omega(\rblie, V)=(\tril, \trir, f, \{-,-\}, P_1, P_2)$ and $\Omega'(\rblie, V)=(\tril', \trir', f', \{-.-\}', P'_1, P'_2)$ are called {\bf equivalent},  denoted by $$\Omega(\rblie, V) \equiv \Omega'(\rblie, V),$$ if there exists a pair $(r,v)$ of linear maps, where $r: V \rightarrow \lie$ and $v \in Aut_{\bfk}(V)$ such that $$(\tril', \trir', f',\{-,-\}', P'_1, P'_2)$$ is obtained from $(\tril, \trir, f, \{-,-\}, P_1, P_2)$ using $(r,v)$ via
\begin{align*}
x \tril' g=&\ v(v^{-1}(x) \tril g),\\
x \trir' g=&\ r(v^{-1}(x) \tril g)+ v^{-1}(x) \trir g+[g, r(v^{-1}(x))],\\
f'(x,y)=&\ f(v^{-1}(x), v^{-1}(y))+r(\{ v^{-1}(x), v^{-1}(y) \})+[r(v^{-1}(x)), r(v^{-1}(y))]-r(v^{-1}(x) \tril r(v^{-1}(y)))\\
&\ -v^{-1}(x) \trir r(v^{-1}(y))+r(v^{-1}(y)\tril r(v^{-1}(x)))+ v^{-1}(y) \trir r(v^{-1}(x)),\\
\{x,y\}'=&\ v(\{v^{-1}(x),v^{-1}(y) \})-v(v^{-1}(x) \tril r(v^{-1}(y)))+v(v^{-1}(y) \tril r(v^{-1}(x))),\\
P'_1(x)=&\ P_1(v^{-1}(x))-\plie(r(v^{-1}(x)))+r(P_2(v^{-1}(x))),\\
P'_2(x)=&\ v(P_2(v^{-1}(x))).
\end{align*}
\end{defn}

Recall that ${Extd}(E,\rblie)$ is the set of all equivalence classes of Rota-Baxter Lie algebra structures on $E$ which stabilizes $\rblie$. Now we give a classification of ${Extd}(E,\rblie)$ as our second main results.

\begin{theorem}
Let $\rblie$ be a Rota-Baxter Lie algebra, $E$ a vector space that contains $\lie$ as a subspace and $V$ a complement of $\lie$ in $E$. Then:
\begin{enumerate}
\item \label{it:aa} $\equiv$ is an equivalence relation on the set $\mathcal{L}(\rblie, V)$ of all Rota-Baxter Lie extending structures of $\rblie$ through $V$. Denote by $H^2_{\rblie}(V, \rblie) := \mathcal{L}(\rblie, V)/ \equiv$, the quotient set.

\item \label{it:ab} The map
\begin{align*}
\Phi: H^2_{\rblie}(V, \rblie) \rightarrow Extd(E,\rblie), \quad \overline{(\tril,\trir, f,\{-,-\},P_1,P_2)} \mapsto (\lie \natural  V, [-, -],\np)
\end{align*}
is a bijection, where $\overline{(\tril,\trir, f,\{-, -\},P_1,P_2)}$ is the equivalence class of $(\tril,\trir, f,\{-,-\},P_1,P_2)$
via $\equiv$.
\end{enumerate}
\label{thm:main}
\end{theorem}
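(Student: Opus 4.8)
The plan is to reduce both parts to Lemma~\ref{lem:iso} and Theorem~\ref{thm:indu}, so that everything becomes a formal consequence of the dictionary between equivalences of extending structures and isomorphisms of the associated unified products. The crucial first step is to observe that the six transformation formulas in Definition~\ref{def:equiv} are precisely conditions~\ref{it:a1}--\ref{it:a6} of Lemma~\ref{lem:iso} solved for the primed structure under the hypothesis that $v$ is invertible: replacing $x$ (and $y$) by $v^{-1}(x)$ (and $v^{-1}(y)$) in each of~\ref{it:a1}--\ref{it:a6} and isolating $\tril'$, $\trir'$, $\{-,-\}'$, $f'$, $P'_1$, $P'_2$ on the left reproduces exactly the displayed formulas. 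Consequently, for $v\in Aut_\bfk(V)$ and any $r:V\to\lie$, the pair $(r,v)$ witnesses $\Omega(\rblie,V)\equiv\Omega'(\rblie,V)$ if and only if $\psi_{(r,v)}:\gv\to\lie\natural'V$ is an isomorphism of Rota-Baxter Lie algebras stabilizing $\rblie$. In short, $\Omega\equiv\Omega'$ holds if and only if there is an isomorphism of Rota-Baxter Lie algebras between the associated unified products that stabilizes $\rblie$.

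For part~\ref{it:aa} I would transport the equivalence-relation structure along this dictionary. The relation on $\mathcal{L}(\rblie,V)$ given by ``there is an isomorphism of the associated unified products stabilizing $\rblie$'' is manifestly reflexive, symmetric and transitive: reflexivity is the identity isomorphism, given by the pair $(r,v)=(0,id_V)$, for which the formulas of Definition~\ref{def:equiv} return $\Omega$ unchanged; symmetry holds because the inverse of an isomorphism stabilizing $\rblie$ again stabilizes $\rblie$, with $\psi_{(r,v)}^{-1}=\psi_{(r',v')}$ where $v'=v^{-1}$ and $r'=-r\circ v^{-1}$; and transitivity comes from composing two such isomorphisms, the composite again stabilizing $\rblie$ with invertible $V$-component. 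Since by the first paragraph this relation coincides with $\equiv$, the latter is an equivalence relation on $\mathcal{L}(\rblie,V)$.

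For part~\ref{it:ab} I would first fix a linear projection $p:E\to\lie$ with $\ker p=V$, identifying $E$ with $\lie\times V$ via $(g,x)\mapsto g+x$ so that $\Phi$ genuinely lands in $Extd(E,\rblie)$. Well-definedness of $\Phi$ is then immediate from the dictionary: if $\Omega\equiv\Omega'$ the two unified products are isomorphic by a map stabilizing $\rblie$, hence represent the same class in $Extd(E,\rblie)$. For injectivity, if $\Phi(\overline{\Omega})=\Phi(\overline{\Omega'})$ then $\gv$ and $\lie\natural'V$ are equivalent in $Extd(E,\rblie)$, i.e.\ there is a Rota-Baxter Lie isomorphism between them stabilizing $\rblie$; by Lemma~\ref{lem:iso} this map is some $\psi_{(r,v)}$ with $v$ bijective satisfying~\ref{it:a1}--\ref{it:a6}, so $\Omega\equiv\Omega'$ and $\overline{\Omega}=\overline{\Omega'}$. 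Surjectivity is exactly Theorem~\ref{thm:indu}: any Rota-Baxter Lie structure on $E$ containing $\rblie$ is isomorphic, through a map stabilizing $\rblie$, to a unified product $\gv$ for some $\Omega\in\mathcal{L}(\rblie,V)$ built on the chosen complement $V$, so its class lies in the image of $\Phi$.

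The genuinely substantive computations---that $\psi_{(r,v)}$ is a morphism exactly under~\ref{it:a1}--\ref{it:a6}, and that every structure on $E$ is a unified product---are already packaged in Lemma~\ref{lem:iso} and Theorem~\ref{thm:indu}, so the remaining work is essentially bookkeeping. The main obstacle in a fully detailed write-up is verifying term by term that Definition~\ref{def:equiv} is indeed the solved form of~\ref{it:a1}--\ref{it:a6}; the formula for $f'$ is the delicate one, since isolating it requires substituting the already-derived expressions for $\trir'$ and $\tril'$ into~\ref{it:a4} and watching the three bracket contributions $[r(v^{-1}(x)),r(v^{-1}(y))]$ recombine correctly. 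The only other point needing care is checking that the fixed identification $E\cong\lie\times V$ respects the stabilize-$\rblie$ condition, so that $\Phi$ is literally a map into $Extd(E,\rblie)$; both are routine once the dictionary of the first paragraph is in place.
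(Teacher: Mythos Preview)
Your proposal is correct and follows essentially the same route as the paper's own proof: establish via Lemma~\ref{lem:iso} that $\Omega\equiv\Omega'$ holds precisely when the associated unified products are isomorphic by a map stabilizing $\rblie$, deduce from this that $\equiv$ is an equivalence relation, and then read off well-definedness and injectivity of $\Phi$ from the same lemma while obtaining surjectivity from Theorem~\ref{thm:indu}. Your write-up is more detailed than the paper's (which compresses part~\ref{it:aa} into a single sentence citing Theorems~\ref{thm:cha}, \ref{thm:indu} and Lemma~\ref{lem:iso}), but the logical skeleton is identical.
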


\begin{proof}
\ref{it:aa} By Theorems~\ref{thm:cha},~\ref{thm:indu} and Lemma~\ref{lem:iso}, $\Omega(\rblie, V)\equiv \Omega'(\rblie, V)$ in the sense of Definition~\ref{def:equiv} if and only if there exists an isomorphism of Rota-Baxter Lie algebras $\psi: \gv \rightarrow \lie \natural'  V$ which stabilizes $\rblie$. Therefore, $\equiv$ is an equivalence relation on the set $\mathcal{L}(\rblie, V)$ of all Rota-Baxter Lie extending structures.

\ref{it:ab} By Lemma~\ref{lem:iso}, the map $\Phi$ is a well-defined injection. By Theorem~\ref{thm:indu}, $\Phi$ is a surjection. Hence $\Phi$ is a bijection.
\end{proof}

\begin{remark}
Let $\Omega(\rblie, V)=(\tril, \trir, f, \{-,-\}, P_1, P_2)$ and $\Omega'(\rblie, V)=(\tril', \trir', f', \{-.-\}', P'_1,\\ P'_2)$ be
two Rota-Baxter Lie extending structures of $\rblie$ through $V$. They are called {\bf cohomologous}, denoted by $\Omega(\rblie, V) \approx \Omega'(\rblie, V)$, if $\tril'=\tril$, $P'_2=P_2$ and there exists a linear map $r: V \rightarrow \lie$ such that
\begin{align*}
x \trir' g &\ =x \trir g+r(x \tril g)-[r(x),g],\\
f'(x,y) &\ =f(x,y)+r(\{x,y\})+[r(x),r(y)]+ y \trir r(x)-x \trir r(y)+r(y \tril r(x))-r(x \tril r(y)),\\
\{ x,y\}' &\ =\{x,y\}-x \tril r(y)+y \tril r(x),\\
P'_1(x) &\ =P_1(x)-P_{\lie}(r(x))+r(P_2(x)),
\end{align*}
for all $g \in \lie$ and $x,y \in V$.

Similar to the proof of Theorem~\ref{thm:main}, we can conclude that $\Omega(\rblie, V) \approx \Omega'(\rblie, V)$ if and only if there exists an isomorphism of Rota-Baxter Lie alebras $\phi: \gv \rightarrow \lie \natural'  V$ which stabilizes $\rblie$ and co-stabilizes $V$. Then $\approx$ is also an equivalent relation on the set $\mathcal{L}(\rblie, V)$ of all Rota-Baxter Lie extending structures of $\rblie$ through $V$. Define
$$H^2(V, \rblie) :=\mathcal{L}(\rblie, V) / \approx,$$
then the map
\begin{align*}
H^2(V,\rblie) \rightarrow Extd'(E, \lie), \quad \overline{(\tril, \trir, f, \{-,-\})} \mapsto (\gv,[-,-], \np)
\end{align*}
is a bijection between $H^2(V,\rblie)$ and the isomorphism classes of all Rota-Baxter Lie algebras structures on $E$ which stabilizes $\rblie$ and co-stabilizes $V$.
\end{remark}

\section{Special cases of  unified products}\label{sec:b}
In the section, we first introduce the notion of crossed products and bicrossed products of Rota-Baxter Lie algebras. Then we prove that both products are two special cases of the unified products of Rota-Baxter Lie algebras.

{\em Throughout the rest of this paper}, we shall omit the map if it is one of the maps $\tril, \trir, f, \{-,-\},P_1$ or $P_2$ of an extending datum $\Omega(\rblie, V)=(\tril, \trir, f, \{-,- \}, P_1, P_2)$ and it is trivial.
\subsection{Crossed products and the extension problem}
In this subsection, we introduce the concept of a crossed product of Rota-Baxter Lie algebras and show that the extension problem studied by Mishra-Das-Hazra~\cite{MDH23} is a special case of the Rota-Baxter Lie extending structures problem.

Let $\Omega(\rblie, V)=(\tril, \trir, f, \{-,-\})$ be an extending datum of $\rblie$ through $V$ such that $\tril$ is a trivial map, i.e. $x \tril g=0$, for all $x \in V$ and $g \in \lie$. It follows from  Theorem~\ref{thm:cha} that $\Omega(\rblie, V)=(\trir,f, \{-,-\})$ is a Rota-Baxter Lie extending structure of $\rblie$ through $V$ if and only if $(V, \{-,-\}, P_2)$ is a Lie algebra that satisfy the following relations:
\begin{align*}
&\ f(x,x)= 0,\\
&\ x \trir [g,h]= [x \trir g,h]+[g, x \trir h],\\
&\ \{x,y\} \trir g= x \trir(y \trir g)-y \trir(x \trir g)+[g,f(x,y)],\\
&\ f(x,\{y,z\}+f(y,\{z,x\})+f(z,\{x,y\})+x \trir f(y,z)+y \trir f(z,x)+z \trir f(x,y)=0,\\
&\ [P_\lie(g),P_1(y)]-P_2(y) \trir P_\lie(g)+P_\lie(y \trir P_\lie(g))-P_\lie([g,P_1(y)])+P_\lie(P_2(y) \trir g)\\
&\ + \lambda P_\lie(y \trir g)=0,\\
&\ [P_1(x), P_1(y)]+P_2(x) \trir P_1(y)-P_2(y) \trir P_1(x)+f(P_2(x),P_2(y))+P_\lie(y \trir P_1(x))\\
&\ - P_\lie (f(P_2(x),y))
-P_1(\{P_2(x),y \})-P_\lie (x \trir P_1(y))-P_\lie( f(x, P_2(y)))-P_1(\{x,P_2(y)\})\\
&\ - \lambda P_\lie (f(x,y))- \lambda P_1 (\{ x,y\})=0.
\end{align*}
for any $g,h \in \lie$ and $x,y,z \in V$.

In this case, the associated unified product $\gv$ is the {\bf crossed product} of the Rota-Baxter Lie algebras $\rblie$ and $(V,P_2)$. A system $(\rblie, (V,P_2), \trir, f, P_1)$ consisting of two Rota-Baxter Lie algebras $\rblie, (V,P_2)$, two bilinear maps $\trir: V \times \lie \rightarrow \lie, f: V \times V \rightarrow \lie$ and a linear map $P_1: V \rightarrow \lie$ satisfying the above conditions is called a {\bf crossed system of Rota-Baxter Lie algebras}. The Rota-Baxter Lie algebra $(\gv, [-,-], \np)$ associated to the crossed system $(\rblie, (V,P_2), \trir, f, P_1)$ is defined as follows: for any $g,h \in \lie$ and $x,y \in V$,
\begin{align*}
[(g,x), (h,y)] :=([g,h]+ x\trir h-y \trir g+f(x,y), \{ x,y\}), \quad \np((g,x)):=(P_\lie(g)+P_1(x), P_2(x)).
\end{align*}
Note that the crossed system provides the answer to the following restricted version of the extending structures problem:
\begin{prob}
Let $\rblie$ be a Rota-Baxter Lie algebra, $E$ a vector space containing $\lie$ as a subspace. Describe and classify all Rota-Baxter Lie algebra structures on $E$ such that $\rblie$ is a Rota-Baxter ideal of $E$.
\end{prob}
Let $(\rblie, (V,P_2), \trir, f,P_1)$ be a crossed system of two Rota-Baxter Lie algebras. Since
\begin{align*}
[(g,0), (h,y)]=([g,h]- y \trir g,0)\  \text{ and }\ \np((g,0))=(P_\lie(g),0),
\end{align*}
we have that $\lie \cong \lie \times \{ 0\}$ is a Rota-Baxter ideal of the Rota-Baxter Lie algebra $\gv$. Next we will prove that any Rota-Baxter Lie algebra structure on a vector space $E$ containing $\rblie$ as a Rota-Baxter ideal can be described by a crossed product of Rota-Baxter Lie algebras.

\begin{coro}
Let $\rblie$ be a Rota-Baxter Lie algebra and let $E$ be a vector space containing $\lie$ as a subspace. Then any Rota-Baxter Lie algebra structure on $E$ that contains $\rblie$ as a Rota-Baxter ideal is isomorphic to a Rota-Baxter Lie algebra associated to a crossed system $(\rblie, (V,P_2), \trir, f,P_1)$.
\label{cor:crossed}
\end{coro}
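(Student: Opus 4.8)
The plan is to derive this corollary as a direct specialization of Theorem~\ref{thm:indu}. Since a Rota-Baxter ideal is in particular a Rota-Baxter Lie subalgebra, Theorem~\ref{thm:indu} already supplies a Rota-Baxter Lie extending structure $\Omega(\rblie, V) = (\tril, \trir, f, \{-,-\}, P_1, P_2)$ through a complement $V$ of $\lie$ in $E$, together with an isomorphism of Rota-Baxter Lie algebras $\phi: \gv \rightarrow (E, \pe)$ that stabilizes $\rblie$ and co-stabilizes $V$. The only extra hypothesis to exploit is that $\lie$ is an ideal rather than merely a subalgebra, and the whole argument reduces to showing that this forces the action $\tril$ to be trivial.

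To this end, I would recall the explicit formula $x \tril g = [x,g] - p([x,g])$ built in the proof of Theorem~\ref{thm:indu}, where $p: E \rightarrow \lie$ is the chosen retraction with $p|_\lie = \mathrm{id}_\lie$ and $V = \ker p$. The Rota-Baxter ideal hypothesis entails, in particular, that $\lie$ is a Lie ideal of $E$, so that $[x,g] = -[g,x] \in [\lie, E] \subseteq \lie$ for every $x \in V$ and $g \in \lie$. Since $p$ restricts to the identity on $\lie$, it follows that $p([x,g]) = [x,g]$, whence $x \tril g = 0$; that is, $\tril$ is the trivial map.

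With $\tril \equiv 0$ in hand, the defining relations of a Rota-Baxter Lie extending structure listed in Theorem~\ref{thm:cha} collapse to precisely the relations defining a crossed system, as spelled out at the beginning of this subsection; in particular condition~\ref{it:11} becomes the Rota-Baxter identity for $P_2$ on $(V, \{-,-\})$, so that $(V, \{-,-\}, P_2)$ is a Rota-Baxter Lie algebra and $(\rblie, (V,P_2), \trir, f, P_1)$ is a crossed system. Consequently the unified product $\gv$ is exactly the crossed product associated to this crossed system, and $\phi$ is the sought isomorphism. I expect no genuine obstacle here: the content of the corollary is entirely concentrated in the vanishing of $\tril$, which is immediate from the ideal condition, while the remaining structure is inherited verbatim from Theorems~\ref{thm:cha} and~\ref{thm:indu}.
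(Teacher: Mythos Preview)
Your proposal is correct and follows essentially the same approach as the paper: invoke Theorem~\ref{thm:indu} (since a Rota-Baxter ideal is in particular a Rota-Baxter Lie subalgebra), then use the explicit formula $x \tril g = [x,g] - p([x,g])$ together with the ideal condition $[x,g]\in\lie$ and $p|_{\lie}=\mathrm{id}_\lie$ to conclude $\tril\equiv 0$, so that the extending structure reduces to a crossed system. Your discussion of why the remaining data forms a crossed system is slightly more detailed than the paper's, but the argument is the same.
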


\begin{proof}
Suppose $(E,\pe)$ is a Rota-Baxter Lie algebra such that $\rblie$ is a Rota-Baxter ideal in $(E,\pe)$. Then $\rblie$ is also a Rota-Baxter Lie subalgebra of $(E,\pe)$. Hence by Theorem~\ref{thm:indu}, there is a Rota-Baxter Lie extending structure $\Omega(\rblie, V)=(\tril,\trir,f, \{-,-\})$ of $\rblie$ through a subspace $V$ of $E$ and an isomorphism of Rota-Baxter Lie algebras $E \cong \gv$. As $\rblie$ is a Rota-Baxter ideal of $(E,\pe)$, for any $x \in V$ and $g \in \lie$, we have that $[x,g] \in \lie$ and hence $p([x,g])=[x,g]$, where $p$ is the linear map in the proof of Theorem~\ref{thm:indu}. Thus, $x \tril_p g=[x,g]-p([x,g])=0$  and $(E,\pe)$ is isomorphic to the Rota-Baxter Lie algebra associated to the crossed system $(\rblie, (V,P_2), \trir, f,P_1)$, where $V=\mathrm{ker}(p)$.
\end{proof}

\begin{defn}\cite[Definition~3.1]{MDH23}
Let $\rblie$ and $(\mathfrak{h},P_{\mathfrak{h}})$ be two Rota-Baxter Lie algebras. A {\bf non-abelian extension} of $\rblie$ by $(\mathfrak{h},P_{\mathfrak{h}})$ is a Rota-Baxter Lie algebra $(\mathfrak{k}, P_{\mathfrak{k}})$ equipped with a short exact sequence of Rota-Baxter Lie algebras
\begin{align*}
0 \rightarrow \mathfrak{h} \rightarrow \mathfrak{k} \rightarrow \mathfrak{g} \rightarrow 0.
\end{align*}
\end{defn}

The equivalent classes of non-abelian extension of $\rblie$ by $(\mathfrak{h},P_{\mathfrak{h}})$ is characterized in~\cite{MDH23}. Actually, the restricted version of the extending structures problem is in face an equivalent reformulation of the non-abelian extension problem. Indeed,  the Rota-Baxter Lie algebra $\gv$ associated to a crossed system $(\rblie, (V,P_2), \trir, f,P_1)$ is an extension of $(V,P_2)$ by $\rblie$ via the following sequence
\begin{align*}
0 \rightarrow \lie \stackrel{i}{\longrightarrow} \gv \stackrel{\pi}{\longrightarrow} V \rightarrow 0,
\end{align*}
where $i: \lie \rightarrow \gv$, $i(g)=(g,0)$ and $\pi:\gv \rightarrow V$, $\pi(g,x)=x$. Conversely, let $(\mathfrak{k}, P_{\mathfrak{k}})$ be an extension of $(V,P_2)$ by $\rblie$. Then there is an exact sequence of Rota-Baxter Lie algebras
\begin{align*}
0 \rightarrow \lie \stackrel{i}{\longrightarrow} \mathfrak{k} \stackrel{\pi}{\rightarrow} V \rightarrow 0.
\end{align*}
As $\lie \cong Im(i)=Ker(pi)$, $\rblie$ can be viewed as a Rota-Baxter ideal of $(\mathfrak{k}, P_{\mathfrak{k}})$. By Corollary~\ref{cor:crossed}, there exists a crossed system $(\rblie, (V,P_2), \trir, f)$ of Rota-Baxter Lie algebras such that $\gv \cong \mathfrak{k}$ as Rota-Baxter Lie algebras.

\subsection{Matched pairs and the factorization problem}

In this subsection, we first recall the concept of a matched pair of Lie algebras~\cite{LW90, Maj90} and then show that how the factorization problem studied by Lang and Sheng~\cite{LS23} is a special case of Theorem~\ref{thm:main}.
\begin{defn}
{\bf A matched pair of Lie algebras} consists of two Lie algebras $(\lie,[-,-])$ and $(V,\{-,-\})$, which satisfy that $\lie$ is a left $V$-module under $\trir: V \otimes \lie \rightarrow \lie$, $V$ is a right $\lie$-module under $\tril: V \otimes \lie \rightarrow V$ and for all $g,h \in \lie$ and $u,v \in V$,
\begin{align*}
u \trir [g,h]=&\ [u \trir g,h]+[g, u \trir h]+(u \tril g) \trir h-(u \tril h) \trir g,\\
\{u,v \} \tril g=&\ \{u, v \tril g \}+ \{u \tril g, v \}+ u \tril (v \trir g)- v \tril (u \trir g).
\end{align*}
\end{defn}

The notion of representations of Rota-Baxter associative algebras was introduced in~\cite{GL19} and further studied in~\cite{QP18}. The concept of representations of Rota-Baxter Lie algebras of weight 0 was given in~\cite{JS21} and the general notion of representations of Rota-Baxter Lie algebras was given in~\cite{LS23} in the study of matched pairs of Rota-Baxter Lie algebras.

\begin{defn}\cite[Definition~3.12]{LS23}
A {\bf matched pair of Rota-Baxter Lie algebras} consists of two Rota-Baxter Lie algebras $\rblie$ and $(\mathfrak{h}, P_{\mathfrak{h}})$, which satisfy that $\rblie$ is a left $(\mathfrak{h}, P_{\mathfrak{h}})$-module under $\trir: \mathfrak{h} \otimes \lie \rightarrow \lie$, $(\mathfrak{h}, P_{\mathfrak{h}})$ is a right $\rblie$-module under $\tril: V \otimes \lie \rightarrow V$ and $(\lie, \mathfrak{h}, \tril, \trir)$ is a matched pair of Lie algebras.
\end{defn}

Let $\Omega(\rblie, V)=(\tril,\trir, f,\{-,-\}, P_1,P_2)$ be an extending datum of $\rblie$ through $V$ such that $f$ and $P_1$ are the trivial maps, i.e. $f(x,y)=0$ and $P_1(x)=0$, for all $x,y \in V$. Then by Theorem~\ref{thm:cha}, $\Omega(\rblie, V)=(\tril,\trir, \{-,-\}, P_2)$ is a Rota-Baxter Lie extending structure of $\rblie$ through $V$ if and only $(V, \{-,-\}, P_2)$ is a Rota-Baxter Lie algebra and $(\rblie, (V,P_2), \tril, \trir)$ is a matched pair of Rota-Baxter Lie algebras. In this case, we denote the associated unified product $\gv$ by $ \lie \bowtie V$ and call it the {\bf bicrossed product} of the matched pair $(\rblie, (V,P_2), \tril, \trir)$ of Rota-Baxter Lie algebras.

The bicrossed product of two Rota-Baxter Lie algebras is the construction which provides the answer for the following factorization problem.

\begin{prob}
Let $\rblie$ and $(\mathfrak{h}, P_{\mathfrak{h}})$ be two Rota-Baxter Lie algebras. Describe and classify all Rota-Baxter Lie algebras $(\mathfrak{k}, P_{\mathfrak{k}})$ that factorize through $\rblie$ and $(\mathfrak{h}, P_{\mathfrak{h}})$, i.e. $(\mathfrak{k}, P_{\mathfrak{k}})$  contains $\rblie$ and $(\mathfrak{h}, P_{\mathfrak{h}})$ as Rota-Baxter Lie subalgebras such that $\mathfrak{k}=\lie+ \mathfrak{h}$ and $\lie \cap \mathfrak{h}=\{0\}$.
\end{prob}
By Proposition~3.14 in~\cite{LS23}, a Rota-Baxter Lie algebra $(\mathfrak{k}, P_{\mathfrak{k}})$ factorizes through $\rblie$ and $(\mathfrak{h}, P_{\mathfrak{h}})$ if and only if there is a matched pair of Rota-Baxter Lie algebras $(\rblie, (\mathfrak{h}, P_{\mathfrak{h}}), \tril,\trir)$ such that $\lie \bowtie \mathfrak{h} \cong (\mathfrak{k}, P_{\mathfrak{k}})$. Hence the factorization problem can be restated as:

\begin{prob}\label{pro:factor}
Let $\rblie$ and $(\mathfrak{h}, P_{\mathfrak{h}})$ be two Rota-Baxter Lie algebras. Describe the set of all matched pairs $(\rblie, (\mathfrak{h}, P_{\mathfrak{h}}), \tril,\trir)$ and classify up to an isomorphism all bicrossed product  $\lie \bowtie \mathfrak{h}$.
\end{prob}

As a bicrossed product of a matched pair $(\rblie, (V,P_2), \tril, \trir)$ of Rota-Baxter Lie algebras is a special case of the unified product of an extending datum $\Omega(\rblie, V)=(\tril,\trir, f,\{-,-\}, P_1,P_2)$ of $\rblie$ through $V$ such that $f$ and $P_1$ are the trivial maps, Problem~\ref{pro:factor} can be characterized by Theorem~\ref{thm:main} as a special case.

\section{Flag extending structures}\label{sec:c}
In this section, we apply our main theorem to a flag extending structure of Rota-Baxter Lie algebras as a special case.

Given a vector space $E$ containing a Rota-Baxter Lie algebra $\rblie$ as a subspace, there is a complement $V$ of $\lie$ in $E$. Although Theorem~\ref{thm:main} gives an classification of the extending structure problem of Rota-Baxter Lie algebras, there is a computational problem.
\begin{prob}
How to compute the object $H^2_{\rblie}(V, \rblie)$ explicitly and how to obtain all Rota-Baxter Lie algebra structures on $E$ which contains $\rblie$ as a Rota-Baxter Lie subalgebra?
\end{prob}

Now we provide a way of answering the problem for a special case.

\begin{defn}
Let $\rblie$ be a Rota-Baxter Lie algebra and let $E$ be a vector space containing $\lie$ as a subspace. A Rota-Baxter Lie algebra on $E$ such that $\rblie$ is a Rota-Baxter Lie subalgebra is called a {\bf flag extending structure} of $\rblie$ to $E$ if there is a finite chain of Rota-Baxter Lie subalgebras of $E$
\begin{align*}
\rblie=(E_0, P_0) \subset (E_1, P_1) \subset \cdots \subset (E_m,P_m)=(E,\pe)
\end{align*}
such that $E_i$ has codimension 1 in $E_{i+1}$, for all $i=0, \ldots, m-1.$
\end{defn}

All flag extending structures of $\rblie$ to $E$ can be described recursively: the first step is to describe and classify all unified products $\lie \natural V_1$, for a 1-dimensional vector space $V_1$; then replace $\rblie$ with $\lie \natural V_1$, describe and classify all unified products of $(\lie \natural V_1) \natural V_2$, where $V_1$ and $V_2$ are vector spaces of dimension 1. After $m$ steps, we obtain all flag extending structure of $\rblie$ to $E$.

Now we show that all unified products of $\lie \natural V$ with $V$ a 1-dimensional vector space are parameterized by the space $ExDer(\rblie)$ of all extended derivations of $\rblie$.
Now we recall the notion of twisted derivation of a Lie algebra~\cite{AM14}.
\begin{defn}\cite[Definition~5.2]{AM14}
Let $\lie$ be a Lie algebra. A {\bf twisted derivation} of $\lie$ is a pair $(\varepsilon,D)$, where $\varepsilon: \lie \rightarrow \bfk$  and $D: \lie \rightarrow \lie$ are two linear maps such that for all $g, h \in \lie$,
\begin{align}
\varepsilon([g,h])=&\ 0, \label{eq:d1}\\
D([g,h])=&\ [D(g), h]+[g, D(h)]+ \varepsilon(g) D(h)-\varepsilon(h) D(g) \label{eq:d2}.
\end{align}
\end{defn}

\begin{defn}
Let $\rblie$ be a Rota-Baxter Lie algebra. A {\bf extended derivation} of $\rblie$ is a quadruple $(\varepsilon,D,g_0, k_0)$, where $\varepsilon: \lie \rightarrow \bfk$ and $D: \lie \rightarrow \lie$ are two linear maps, $g_0 \in \lie$ and $k_0 \in \bfk$ such that $(\varepsilon,D)$ is a twisted derivation of $\lie$ and for all $g \in \lie$,
\begin{align}
[\plie(g), g_0]&\ -k_0 D(\plie(g)) +\plie (D (\plie(g)))+ \varepsilon(\plie(g)) g_0-\plie([g,g_0])\nonumber\\
&\ +\plie(k_0 D(g))
+k_0 \varepsilon(g)g_0
+ \lambda \plie(D(g))+\lambda \varepsilon(g) g_0= 0, \label{eq:d3}\\
&\ k_0^2 \varepsilon(g)+\lambda k_0 \varepsilon(g)= 0 \label{eq:d4}.
\end{align}
\end{defn}
Denote the set of all extended derivations of $\rblie$ by $ExDer(\rblie)$. We shall now prove that the set of all Rota-Baxter Lie extending structures $\mathcal{L}(\rblie, V)$ of a Rota-Baxter Lie algebra $\rblie$ through a 1-dimensional vector space $V$ is parameterized by $ExDer(\rblie)$.

\begin{prop}
Let $\rblie$ be a Rota-Baxter Lie algebra and let $V$ be a vector space of dimension 1 with a basis $\{x\}$. Then there exists a bijection between the set $\mathcal{L}(\rblie, V)$ of all Rota-Baxter Lie extending structures of $\rblie$ through $V$ and the space $ExDer(\rblie)$ of all extended derivations of $\rblie$. Through the above bijection, the Rota-Baxter Lie extending structure $\Omega(\rblie, V)=(\tril, \trir, f, \{-,-\}, P_1, P_2)$ associated to $(\varepsilon,D, g_0,k_0) \in ExDer(\rblie)$ is given by
\begin{align*}
x \tril g=\varepsilon(g)x,\, x \trir g=D(g),\, f=0, \, \{-,-\}=0, P_1(x)=g_0, \, P_2(x)=k_0 x,
\end{align*}
for all $g \in \lie$ and $x \in V$. Denote the corresponding unified product $\gv$ by $\lie \natural_{(\varepsilon,D,g_0,k_0)} V$.
\end{prop}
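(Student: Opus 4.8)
The plan is to invoke the characterization of unified products in Theorem~\ref{thm:cha} and read off what each condition \ref{it:1}--\ref{it:11} becomes once $V$ is one-dimensional. First I would fix the parameterization. Write $V=\bfk x$. Condition \ref{it:1} forces $f(x,x)=0$ and $\{x,x\}=0$, and since $f$ and $\{-,-\}$ are bilinear on a line this makes $f=0$ and $\{-,-\}=0$ identically. The remaining data of the extending datum is then encoded by four objects: the map $\tril\colon V\times\lie\to V$ lands in $\bfk x$, so $x\tril g=\varepsilon(g)x$ for a unique linear $\varepsilon\colon\lie\to\bfk$; the map $\trir\colon V\times\lie\to\lie$ is a single linear map $D:=(x\trir-)\colon\lie\to\lie$; the map $P_1\colon V\to\lie$ is an element $g_0:=P_1(x)\in\lie$; and $P_2\colon V\to V$ is a scalar, $P_2(x)=k_0x$ with $k_0\in\bfk$. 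The assignment $(\tril,\trir,0,0,P_1,P_2)\mapsto(\varepsilon,D,g_0,k_0)$ is visibly a bijection between extending data of $\rblie$ through $V$ with $f=0$, $\{-,-\}=0$ and the quadruples $(\varepsilon,D,g_0,k_0)$, with inverse given exactly by the formulas in the statement; so it remains only to match the defining relations.

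Next I would translate the conditions of Theorem~\ref{thm:cha}. Because every relation \ref{it:1}--\ref{it:11} is multilinear in its $V$-arguments and $V$ is spanned by $x$, it suffices to evaluate each on $x$ (and on the pair $x,x$ where two $V$-arguments occur). I expect the following dichotomy. Condition \ref{it:2} collapses, because the right-module law produces $\varepsilon(g)\varepsilon(h)-\varepsilon(h)\varepsilon(g)=0$, to $\varepsilon([g,h])=0$, which is Eq.~(\ref{eq:d1}); condition \ref{it:3} becomes $D([g,h])=[D(g),h]+[g,D(h)]+\varepsilon(g)D(h)-\varepsilon(h)D(g)$, which is Eq.~(\ref{eq:d2}). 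Together these say precisely that $(\varepsilon,D)$ is a twisted derivation. Substituting $P_1(x)=g_0$, $P_2(x)=k_0x$, $x\trir g=D(g)$, $x\tril g=\varepsilon(g)x$ into \ref{it:8} (and reading the term $\plie([\lie,P_1(y)])$ as $\plie([g,g_0])$) reproduces Eq.~(\ref{eq:d3}) term by term, while \ref{it:9} collapses — the two $\varepsilon(\plie(g))k_0\,x$ summands cancel — to $(k_0^2+\lambda k_0)\varepsilon(g)=0$, which is Eq.~(\ref{eq:d4}).

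Finally I would check that the remaining conditions carry no information. Conditions \ref{it:4}--\ref{it:7} and \ref{it:10}--\ref{it:11} all reduce to $0=0$ once $f=0$ and $\{-,-\}=0$ are imposed and the single basis vector is substituted: in \ref{it:4} and \ref{it:5} the surviving terms pair off ($x\tril(y\trir g)$ against $y\tril(x\trir g)$, resp. $x\trir(y\trir g)$ against $y\trir(x\trir g)$) and cancel at $x=y$, in \ref{it:6} and \ref{it:7} every summand carries a factor of $f$ or $\{-,-\}$, and in \ref{it:10}--\ref{it:11} evaluation at $x=y$ makes each antisymmetric pair (such as $P_2(x)\trir P_1(y)$ against $P_2(y)\trir P_1(x)$, or $P_2(x)\tril P_1(y)$ against $P_2(y)\tril P_1(x)$) cancel while the $f$- and $\{-,-\}$-terms vanish. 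Combining these facts, $(\tril,\trir,0,0,P_1,P_2)$ lies in $\mathcal{L}(\rblie,V)$ if and only if $(\varepsilon,D,g_0,k_0)\in ExDer(\rblie)$, and together with the parameterization of the first paragraph this yields the asserted bijection. The main labor is the bookkeeping in matching \ref{it:8} with Eq.~(\ref{eq:d3}), where one must track the $k_0$- and $\lambda$-weighted terms carefully; everything else is a sequence of cancellations forced by one-dimensionality.
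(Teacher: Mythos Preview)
Your proposal is correct and follows essentially the same approach as the paper: parameterize the extending datum by $(\varepsilon,D,g_0,k_0)$ using one-dimensionality, then match conditions \ref{it:2}, \ref{it:3}, \ref{it:8}, \ref{it:9} with Eqs.~(\ref{eq:d1})--(\ref{eq:d4}) while observing that \ref{it:4}--\ref{it:7} and \ref{it:10}--\ref{it:11} become vacuous. You actually give more detail than the paper on why the latter conditions collapse (the paper simply asserts they ``hold automatically''), but the strategy is identical.
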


\begin{proof}
Note that the set $\mathcal{L}(\rblie, V)$ of all Rota-Baxter Lie extending structures of $\rblie$ through $V$ is equivalent to the set of all the maps $\{(\tril, \trir, f, \{-,-\}, P_1, P_2) \}$, where $\tril: V \times \lie \rightarrow \lie, \trir: V \times \lie \rightarrow V, f: V \times V \rightarrow \lie, \{-,-\}: V \times V \rightarrow V, P_1:V \rightarrow \lie$ and $P_2: V \rightarrow V$ satisfying the conditions~\ref{it:1}-\ref{it:11} of Theorem~\ref{thm:cha}.
Since $V$ has dimension 1, the condition~\ref{it:1} is equivalent to that $f$ and $\{-,-\}$ are trivial maps. Again by the dimension of $V$ is 1, there is a bijection between the set of all bilinear maps $\tril: V \times \lie \rightarrow V$ and the set of all linear maps $\varepsilon: \lie \rightarrow \bfk$. The bijection is given such that the action $\tril: V \times \lie \rightarrow V$ associated to $\varepsilon$ is given by the formula: $x \tril g=\varepsilon(g)x$, for all $g \in \lie$. Similarly, any bilinear map $\trir: V \times \lie \rightarrow \lie$ is uniquely determined by a linear map $D: \lie \rightarrow \lie$ via the formula: $x \trir g=D(g)$, for all $g \in \lie$. Moreover, the linear map $P_1: V \rightarrow \lie$ is uniquely determined by an element $g_0 \in \lie$ via the formula: $P_1(x)=g_0$ and the linear map $P_2: V \rightarrow V$ is uniquely determined by an element $k_0 \in \bfk$ via the formula: $P_2(x)=k_0 x$.

Since $dim_{\bfk}(V)=1,f=0, \{-,-\}=0$, we find that the conditions~\ref{it:4}-\ref{it:7},~\ref{it:10}-\ref{it:11} hold automatically.
Moreover, the condition~\ref{it:2} is equivalent to Eq.~(\ref{eq:d1}) holds, the condition~\ref{it:3} is equivalent to Eq.~(\ref{eq:d2}) holds, the condition~\ref{it:8} is equivalent to Eq.~(\ref{eq:d3}) holds and the condition~\ref{it:9} is equivalent to Eq.~(\ref{eq:d4}) holds. This completes the proof.
\end{proof}

\begin{defn}
Let $(\varepsilon,D, g_0,k_0)$ and $(\varepsilon',D',g'_0,k'_0)$ be two extended derivations of a Rota-Baxter Lie algebra $\rblie$. $(\varepsilon,D, g_0,k_0)$ and $(\varepsilon',D',g'_0,k'_0)$ are {\bf equivalent}, denoted by $(\varepsilon,D, g_0,k_0) \equiv (\varepsilon',D',g'_0,k'_0)$, if $\varepsilon=\varepsilon', k_0=k'_0$ and there exists an element $g_1 \in \lie$ and a nonzero element $k_1 \in \bfk$ such that for any $g \in g$,
\begin{align}
D(g)=&\ k_1 D'(g)+[g_1, g]-\varepsilon(g)g_1, \label{eq:e1}\\
g_0=&\ \plie(g_1)+k_1 g'_0-k_0 g_1. \label{eq:e2}
\end{align}
\end{defn}

\begin{theorem}
Let $\rblie$ be a Rota-Baxter Lie algebra of codimension 1 in a vector space $E$ and let $V$ be a complement of $\lie$ in $E$ with basis $\{x\}$. Then $\equiv$ is an equivalent relation of the set $ExDer(\rblie)$ of all extended derivations of $\rblie$ and $Extd(E, \lie) \cong H^2_{\rblie}(V, \rblie) \cong ExDer(\rblie)/ \equiv$.
\end{theorem}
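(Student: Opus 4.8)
The plan is to transport the classification obtained in Theorem~\ref{thm:main} across the bijection $\mathcal{L}(\rblie,V)\cong ExDer(\rblie)$ furnished by the preceding Proposition, and then to verify that the equivalence relation $\equiv$ on Rota-Baxter Lie extending structures from Definition~\ref{def:equiv} is carried, under this bijection, precisely to the relation $\equiv$ on $ExDer(\rblie)$ defined just above. First I would record the parametrization coming from $\dim_{\bfk}V=1$: a linear map $r\colon V\to\lie$ is the same datum as an element $g_1:=r(x)\in\lie$, and an automorphism $v\in Aut_{\bfk}(V)$ is the same datum as a nonzero scalar $k_1\in\bfk$ with $v(x)=k_1x$ and $v^{-1}(x)=k_1^{-1}x$. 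Recall also that the extending structure attached to $(\varepsilon,D,g_0,k_0)$ is given by $x\tril g=\varepsilon(g)x$, $x\trir g=D(g)$, $f=0$, $\{-,-\}=0$, $P_1(x)=g_0$, $P_2(x)=k_0x$.

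The heart of the argument, and the step I expect to require the only real computation, is to feed this parametrization into the six transformation rules of Definition~\ref{def:equiv} and to read off the resulting scalar- and vector-valued identities. The rules for $f'$ and $\{-,-\}'$ are vacuous, since both maps vanish on a one-dimensional space. Evaluating the rule for $\tril'$ at $x$ forces $\varepsilon'=\varepsilon$, and the rule for $P_2'$ forces $k_0'=k_0$, which are exactly the scalar constraints in the definition of $\equiv$ on $ExDer(\rblie)$. Evaluating the rule for $\trir'$ at $x$ and comparing with $D'(g)=x\trir' g$ yields $k_1D'(g)=D(g)+[g,g_1]+\varepsilon(g)g_1$; solving for $D(g)$ gives precisely Eq.~(\ref{eq:e1}). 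Likewise, evaluating the rule for $P_1'$ at $x$ gives $k_1g_0'=g_0-\plie(g_1)+k_0g_1$, which rearranges to Eq.~(\ref{eq:e2}). Hence $\Omega(\rblie,V)\equiv\Omega'(\rblie,V)$ holds in the sense of Definition~\ref{def:equiv} if and only if $(\varepsilon,D,g_0,k_0)\equiv(\varepsilon',D',g_0',k_0')$ holds in the sense defined above, the pair $(r,v)$ matching the pair $(g_1,k_1)$.

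With this dictionary established, the remaining assertions are formal. Since $\equiv$ is an equivalence relation on $\mathcal{L}(\rblie,V)$ by Theorem~\ref{thm:main}\ref{it:aa}, transporting it along the bijection shows that $\equiv$ is an equivalence relation on $ExDer(\rblie)$. The bijection of the preceding Proposition then descends to the quotients and produces $H^2_{\rblie}(V,\rblie)=\mathcal{L}(\rblie,V)/\!\equiv\;\cong\;ExDer(\rblie)/\!\equiv$. Composing with the bijection $Extd(E,\rblie)\cong H^2_{\rblie}(V,\rblie)$ of Theorem~\ref{thm:main}\ref{it:ab} yields the desired chain of isomorphisms and finishes the proof; the only point demanding care remains the verification of Eqs.~(\ref{eq:e1})--(\ref{eq:e2}), namely the bookkeeping of the scalar $k_1$ and the element $g_1$ in the rules for $\trir'$ and $P_1'$.
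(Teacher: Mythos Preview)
Your proposal is correct and follows essentially the same line as the paper's proof: parametrize $r$ and $v$ by $g_1\in\lie$ and $k_1\in\bfk^\times$, observe that the cocycle-type conditions are vacuous in dimension~$1$, and read off $\varepsilon'=\varepsilon$, $k_0'=k_0$, Eq.~(\ref{eq:e1}) and Eq.~(\ref{eq:e2}) from the remaining constraints. The only cosmetic difference is that the paper enters via the conditions \ref{it:a1}--\ref{it:a6} of Lemma~\ref{lem:iso}, whereas you enter via the equivalent transformation formulas of Definition~\ref{def:equiv}; these encode the same information, so the arguments are interchangeable.
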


\begin{proof}
By Lemma~\ref{lem:iso}, there is an isomorphism of Rota-Baxter Lie algebras between $\lie \natural_{(\varepsilon,D, g_0,k_0)} V$ and $\lie \natural_{(\varepsilon',D', g'_0,k'_0)}$ which stabilizes $\rblie$ if and only if there are linear maps $r: V \rightarrow \lie$ and $v: V \rightarrow V$ satisfying the conditions~\ref{it:a1}-\ref{it:a6} and $v$ is a bijection. Since $V=\bfk \{x\}$, the map $r: V \rightarrow \lie$ is uniquely determined by an element $g_1 \in \lie$ such that $r(x)=g_1$ and the bijective map $v: V \rightarrow V$ is uniquely determined by a nonzero element $k_1 \in \bfk$ such that $v(x)=k_1 x$. Note that $f=f'=0$ and $\{-,-\}=\{-,-\}'=0$ in the corresponding Rota-Baxter Lie extending structures, the conditions~\ref{it:a3} and~\ref{it:a4} hold trivially. The condition~\ref{it:a1} is equivalent to $k_1 \varepsilon'(g)x=k_1 \varepsilon(g)x$, for all $g \in \lie$. As $k_1 \neq 0$, $\varepsilon=\varepsilon'$. The condition~\ref{it:a2} is equivalent to Eq.~(\ref{eq:e1}) and the condition~\ref{it:a5} is equivalent to Eq.~(\ref{eq:e2}). Moreover, the condition~\ref{it:a6} is equivalent to $k_0k_1x=k'_0k_1x$ and hence $k_0=k'_0$ by $k_1 \neq 0$. Thus $\lie \natural_{(\varepsilon,D,g_0,k_0)} V \cong \lie \natural_{(\varepsilon',D',g'_0,k'_0)} V$ as Rota-Baxter Lie algebras if and only if $(\varepsilon,D,g_0,k_0) \equiv (\varepsilon',D',g'_0,k'_0)$. This completes the proof.
\end{proof}

\smallskip

\noindent {\bf Acknowledgments}: This research is supported by the National Natural Science Foundation of China (Grant No.\@12101316).


\end{document}